\newcommand{\hs}{\hspace{1em}}
\newcommand{\norm}[1]{\left\lVert#1\right\rVert}
\newcommand\AMSname{AMS subject classifications}
\newcommand\keywordsname{Key words}
\newcommand{\newcomment}[1]{}
\newenvironment{@abssec}[1]{%
	\if@twocolumn
	\section*{#1}%
	\else
	\vspace{.05in}\footnotesize
	\parindent .2in
	{\upshape\bfseries #1. }\ignorespaces
	\fi}
{\if@twocolumn\else\par\vspace{.1in}\fi}
\newenvironment{AMS}{\begin{@abssec}{\AMSname}}{\end{@abssec}}
\newtheorem{example}{Example}
\newtheorem{definition}{Definition}[section]
\numberwithin{equation}{section}
\numberwithin{figure}{section}
\numberwithin{table}{section}
\newtheorem{theorem}{Theorem}[section]
\newtheorem{proposition}{Proposition}[section]
\newtheorem{corollary}{Corollary}[section]
\newtheorem{remark}{Remark}[section]
\newcommand{\email}[1]{\protect\href{mailto:#1}{#1}}
\newenvironment{keywords}{\begin{@abssec}{\keywordsname}}{\end{@abssec}}
\newcommand{\mat}[1]{\left[ \begin{array}{#1} }
\newcommand{\rix}{\end{array} \right]}
\title{2D Eigenvalue Problem I: 
Existence and Number of Solutions~\thanks{
version \today.
}
}
\author{
Yangfeng Su\footnotemark[2]
\and 
Tianyi Lu\thanks{School of Mathematical Sciences, 
Fudan University, Shanghai 200433, China
(\email{tylu17@fudan.edu.cn}, \email{yfsu@fudan.edu.cn}). 
}
\and Zhaojun Bai\thanks{Department of Computer Science
and Department of Mathematics, University of California,
Davis, CA 95616, USA (\email{zbai@ucdavis.edu})}
}
\DeclareMathOperator{\diag}{diag}
\DeclareMathOperator{\rank}{rank}
\DeclareMathOperator{\real}{Re}
\DeclareMathOperator{\Span}{span}
\DeclareMathOperator{\sign}{sign}
\begin{document}

\maketitle

\begin{abstract}
A two dimensional eigenvalue problem (2DEVP)
of a Hermitian matrix pair $(A, C)$ is introduced in this paper.
The 2DEVP can be viewed as a linear algebra formulation of
the well-known eigenvalue optimization problem of
the parameter matrix $H(\mu) = A - \mu C$.
We present fundamental properties of the 2DEVP such as the
existence, the necessary and sufficient condition for
the finite number of 2D-eigenvalues and variational characterizations
of 2D-eigenvalues.  We use eigenvalue optimization problems from 
the minmax of two Rayleigh quotients and the computation
of distance to instability to show their connections
with the 2DEVP and new insights of these problems
derived from the properties of the 2DEVP.
\end{abstract}

\begin{keywords}
eigenvalue problem, eigenvalue optimization,
variational characterization.
\end{keywords}

\begin{AMS}
65F15, 65K10 
\end{AMS}

\section{Introduction} \label{eq:intro}
We are interested in finding scalars $\mu, \lambda \in \mathbb{R}$
and nonzero vectors $x \in \mathbb{C}^n$ to satisfy the nonlinear equations
\begin{subequations}\label{2deig}
\begin{empheq}[left={}\empheqlbrace]{alignat=2}
(A-\mu C) x & = \lambda x,  \label{eq:1a} \\
x^HCx & = 0, \label{eq:1b}   \\
x^Hx & = 1,    \label{eq:1c}
\end{empheq}
\end{subequations}
where $A, C \in \mathbb{C}^{n\times n}$ are given Hermitian matrices and
$C$ is indefinite.
The pair $(\mu, \lambda)$ is called a \emph{2D-eigenvalue},
$x$ is called the corresponding \emph{2D-eigenvector}, and
the triplet $(\mu,\lambda,x)$ is called a \emph{2D-eigentriplet}.
We use the term ``$2D$'' based on the fact that
an eigenvalue has two components,  which is a point in
the $(\mu, \lambda)$-plane.
The nonlinear equations \eqref{2deig} are called
a \emph{two dimensional eigenvalue problem},
{2DEVP} in short,  of the matrix pair $(A,C)$.

Our interest in studying the 2DEVP \eqref{2deig}
primarily stems from eigenvalue optimization problems. 
If we regard $\mu$ as a parameter
in the 2DEVP (\ref{2deig}),
the equation \eqref{eq:1a} is a parameter eigenvalue problem
of $H(\mu) = A - \mu C$.  Since $A$ and $C$ are Hermitian,
$H(\mu)$ has $n$ real eigenvalues $\lambda_1(\mu), \lambda_2(\mu), \ldots, \lambda_n(\mu)$
for any $\mu\in\mathbb{R}$.
Suppose that these eigenvalues are sorted such that
$\lambda_1(\mu) \ge \lambda_2(\mu) \ge \cdots \ge \lambda_n(\mu)$.
When one wants to optimize an eigenvalue $\lambda_i(\mu)$ with respect
to $\mu$: 
\begin{equation} \label{eq:eigopt0}
\inf\limits_{\mu\in\mathbb{R}}\lambda_{i}(\mu),
\end{equation}
the second equation \eqref{eq:1b}
is actually a stationary condition for (local or global) maxima
or minima (see Section~\ref{sec:variationProp}).  This fact has been observed
by Overton \cite{Overton1995} when $\lambda_i(\mu_*)$ is a simple eigenvalue
of $H(\mu_*)$ at a stationary point $\mu_*$.
In general, when $\lambda_i(\mu_*)$ is a multiple eigenvalue,
to the best of our knowledge, the connection to the 2DEVP (\ref{2deig})
as presented in this paper is new.

Different equivalent conditions of
the eigenvalue optimization 
have been discovered in the literature, such as conditions
based on the generalized gradient \cite{POLAK1982} and existence of a special positive
semidefinite matrix \cite{FN1995} in the context of minimizing the largest eigenvalue
of a multivariable Hermitian matrix. These conditions will lead to a
different optimization method. 
The eigenvalue optimization in the presence of multiplicity
is still one of main challenges
\cite{FN1995,Kangal2018,KLV2017,LO1996,MYK2014,Ove1988,Overton1995,POLAK1982,SP2005}.


Blum and Chang \cite{BC1978}
considered the following so-called two-parameter or double eigenvalue problem arising
from solving a boundary value problem of ordinary differential equations with double parameters:
\begin{equation}\label{eq:tpeig}
  \left\{
  \begin{aligned}
   Ax&=\lambda C_1x+\mu C_2 x, \\
   f(x)&=0,\\
   \|x\|&=1,
  \end{aligned}
  \right.
\end{equation}
where $A, C_1, C_2 \in \mathbb{R}^{n\times n}$ and $f$ is a real-valued function.
$\lambda, \mu \in \mathbb{R}$ and $x \in \mathbb{R}^n$ are the
eigenvalues and eigenvectors to be found.
Khazanov~\cite{Kha2005}
generalized the problem (\ref{eq:tpeig}) to more than two parameters
and derived a related eigenvalue problem. 
Obviously, when $A$ and $C$ in \eqref{2deig} are real,
the 2DEVP (\ref{2deig}) is a special case of \eqref{eq:tpeig}.
The reasons for our study of the 2DEVP (\ref{2deig}) are two-fold.
There is a lack of analysis of the two-parameter eigenvalue 
problem \eqref{eq:tpeig} such as the existence, and convergence analysis 
of proposed algorithms in \cite{BC1978,Kha2005}. 
More important, only real matrices are considered.
The extension to the complex matrices is necessary for applications such as
calculating the distance to instability \cite{ahownear1985}
(see Section~\ref{applications}). It is a non-trivial extension since
one cannot take the derivatives of the equations \eqref{eq:1b} 
and \eqref{eq:1c} directly.


The objectives of this paper include revealing the relationship between
the 2DEVP \eqref{2deig} and the eigenvalue optimization of 
the matrix $H(\mu) = A - \mu C$,
and studying fundamental properties of the 2DEVP \eqref{2deig} such as the
existence and the necessary and sufficient condition for the finite number 
of 2D-eigenvalues.
This is the first paper of ours in a sequel on the 2DEVP \eqref{2deig}.
In the forthcoming work, we will show that by transforming the eigenvalue 
optimization \eqref{eq:eigopt0} into the 2DEVP \eqref{2deig},
we will be able to migrate well-established Rayleig quotient iteration 
for solving symmetric eigenvalue problems to the large-scale 
2DEVP \eqref{2deig}.

The rest of this paper is organized as follows.
In Section \ref{applications},
we discuss two eigenvalue optimizations as the origins 
of the 2DEVP \eqref{2deig}.
In Section \ref{sec_2by2}, we use simple 2-by-2 2DEVPs to reveal
some essential features and complexity of the 2DEVP.
In Section \ref{sec_parameterEVP}, we study the related
parameter eigenvalue problems and introduce the notion of
sorted eigencurves and
analyticalized eigencurves.
In Section \ref{sec:variationProp}, 
we investigate existence and variational characterization of 
2D-eigenvalues by exploiting the connection between 
the 2D-eigenvalues and the stationary points of sorted eigencurves.
In Section \ref{sec_wellposed}, we provide 
a necessary and sufficient condition
for the existence of finite number of 2D-eigenvalues.
In Section~\ref{sec_examples}, we revisit the two eigenvalue optimization
problems in Section \ref{applications} to show new insights derived from
the properties of the 2DEVP \eqref{2deig}. 
Concluding remarks are in Section \ref{sec:conclude}.



\section{Applications}\label{applications}
In this section, we discuss two eigenvalue optimization problems 
that can be formulated as the 2DEVP~\eqref{2deig}.

\subsection{Minmax of Rayleigh Quotients}\label{sec:minmax2RQs}

Given Hermitian matrices $A, B \in \mathbb{C}^{n \times n}$, 
the following minmax problem of Rayleigh quotients (RQminmax):
\begin{equation}\label{prob:minmaxRQ}
	\min_{x\neq0}\,\max \Set{ \dfrac{x^HAx}{x^Hx}, \dfrac{x^HBx}{x^Hx} },
\end{equation}
arises from quadratic constrained quadratic programs (QCQP) 
with constraints \cite{GH2013}.  

We have the following theorem for the characterization of the solution of the RQminmax~\eqref{prob:minmaxRQ}.

\begin{theorem}[\cite{GH2013}]\label{thm:classification0}  
Let $\lambda_A$ ($\lambda_B$) be the minimum eigenvalue of $A$ ($B$), 
$S_A$ ($S_B$) be an orthonormal basis of 
the corresponding eigen-subspace of $\lambda_A$ ($\lambda_B$), 
and $(\theta_A,z_A)$ ($(\theta_B,z_B)$)  
{be} the minimum eigenpair of $S_B^HAS_B$ ($S_A^HBS_A$). 
\begin{enumerate}[I.] 
\item\label{item:A1}
If $\lambda_A>\theta_B$, 
then $x_* = S_Az_B$ is a solution of the RQminmax \eqref{prob:minmaxRQ}.

\item\label{item:A2}
If $\lambda_B>\theta_A$, 
then $x_* = S_Bz_A$ is a solution of 
the RQminmax~\eqref{prob:minmaxRQ}.

\item\label{item:A3}
Otherwise, namely $\lambda_A \leq \theta_B$ and $\lambda_B \leq \theta_A$, 
$x_*$ is a solution of the RQminmax~\eqref{prob:minmaxRQ} 
if and only if 
$x_*$ is an eigenvector corresponding to the minimum eigenvalue 
$\lambda_{\min}(A-\mu_* C)$ of $A-\mu C$ and $x_*^HCx_*=0$, 
where $C = A-B$ and
$\mu_*$ is an optimizer of the following eigenvalue optimization problem 
(EVopt): 
\begin{equation}\label{eq:EVopt0}
\max\limits_{\mu\in \mathbb{R}}\lambda_{\min}(A-\mu C).
\end{equation}
\end{enumerate}
\end{theorem}

Theorem~\ref{thm:classification0}  
is actually a generalization of the results by Gaurav and Hari \cite{GH2013}. 
In \cite{GH2013}, it is implicitly assumed that the multiplicities 
of $\lambda_A$, $\lambda_B$ and $\lambda_{\min}(A-\mu_*C)$ are one, while 
Theorem~\ref{thm:classification0} does not need 
this assumption. In addition, Theorem~\ref{thm:classification0} 
provide a precise description of the relation between the 
RQminmax~\eqref{prob:minmaxRQ} and the EVopt~\eqref{eq:EVopt0}.
A proof of Theorem~\ref{thm:classification0} is in Appendix~\ref{append:thmRQ}.

In Theorem~\ref{anavarchar}, we will show that the solution of
the EVopt~\eqref{eq:EVopt0} is the minimum 2D-eigentriplet of 
the 2DEVP \eqref{2deig}.  We note that the RQminmax \eqref{prob:minmaxRQ} 
also arises from optimal conditions of CDT problems
in the trust region methods for nonlinear equality constrained optimization
\cite{Celis1985,Yuan1990}, transmit beamforming \cite{Gershman2010,Karipidis2007,Zhang2009}, 
MIMO relay optimization \cite{Roos1999} and 
cognitive radio networks \cite{Zhang2011}, and is closely related to the well-known S-lemma in control theory and robust optimization \cite{Polik2007,zong2010}.

\subsection{Distance to instability}
A basic problem in studying the stability of linear dynamical systems 
is to compute the distance to instability, 
e.g. \cite[$\mathsection49$]{Trefethen2005}.
In matrix notation, for a stable matrix 
$\widehat{A}\in \mathbb{C}^{m\times m}$,
namely all eigenvalues of $\widehat{A}$ 
locate in the left half of the complex plane $\mathbb{C}$, 
the distance to instability (DTI) $\beta(\widehat{A})$ is defined by
\begin{equation} \label{eq:defdti} 
\beta(\widehat{A})=
\min\Set{ \norm{E}\ \big|\ \widehat{A}+E \text{  is unstable}, 
E \in \mathbb{C}^{m\times m}}.
\end{equation} 
Van Loan \cite{ahownear1985} showed that the DTI $\beta(\widehat{A})$ can be 
recast as the singular value optimization 
\begin{equation} \label{eq:relationse}  
\beta(\widehat{A}) 
 = \min_{\mu \in \mathbb{R}} \sigma_{\min} (\widehat{A}-\mu {\tt i} I), 
\end{equation} 
where ${\tt i} = \sqrt{-1}$ and 
$\sigma_{\min}(X)$ refers to the smallest singular value 
of the matrix $X$.

By the relation between the singular values of a matrix $X$ 
and eigenvalues of Hermitian matrix $\begin{bmatrix}
0&X^H\\
X&0
\end{bmatrix}$, see e.g. \cite[Theorem~3.3]{demmel1997applied},  
	the singular value optimization \eqref{eq:relationse} 
can be transformed to the eigenvalue optimization (EVopt)
\begin{equation} \label{eq_distance}
	\beta(\widehat{A}) =\min_{\mu \in \mathbb{R}}\lambda_m(A-\mu C),
\end{equation} 
where $A$ and $C$ are $2m\times 2m$ matrices: 
$A = \left[ \begin{smallmatrix} 
	& \widehat{A} \\ 
	\widehat{A}^H & \\ 
\end{smallmatrix} \right]$ 
and $C = \left[ \begin{smallmatrix} 
	& {\tt i}I  \\ 
	-{\tt i}I  & \\ 
\end{smallmatrix} \right]$,  
and $\lambda_m(A-\mu C)$ is the smallest positive eigenvalue of $A-\mu C$. 

In Theorem~\ref{thm_minima_is_2devl}, we will show that if
$\mu_*$ is an optimizer of \eqref{eq_distance}, 
then $(\mu_*, \lambda_*)$ is a 2D-eigenvalue of the 2DEVP \eqref{2deig},
where $\lambda_* = \lambda_m(A-\mu_* C)$. 

\section{2-by-2 2DEVPs} \label{sec_2by2}

We start from the smallest 2-by-2 2DEVPs, namely $A$ and $C$ are 2-by-2 
Hermitian matrices to investigate the existence 
of 2D-eigentriplets, and connection with the stational points
of the eigencurves of the parameter matrix $A-\mu C$. 
Without loss of generality, we assume that the indefinite matrix $C$ is diagonal
with diagonal elements $c_1>0$ and $c_{2}<0$, and 
the 2-by-2 matrices of $A$ and $C$ are denoted by
\[
A  = \begin{bmatrix} 
a_{11} & a_{12} \\ 
\bar{a}_{12} & a_{22} \\ 
\end{bmatrix} 
\quad \mbox{and} \quad 
C  = \begin{bmatrix} 
c_{1} &   \\ 
      & c_{2} \\ 
\end{bmatrix}. 
\] 

First we note that up to a scaling
any nonzero vector $x$ satisfying \eqref{eq:1b} and \eqref{eq:1c} is of the form
\begin{equation} \label{eq:xform}
x(\alpha)=\frac{1}{\sqrt{c_1-c_2}}\begin{bmatrix}
\sqrt{-c_2}\\ \alpha\sqrt{c_1}\end{bmatrix}, 
\end{equation}
where $\alpha \in \mathbb{C}$ and $|\alpha|=1$. 
By multiplying $x^H(\alpha)$ and $x^H(\alpha)C$ on the
left of ~\eqref{eq:1a} respectively, we have
\begin{equation}\label{eq:mulambda2by2}
\mu=\dfrac{x^H(\alpha)CAx(\alpha)}{\|Cx(\alpha)\|^2}
\quad \mbox{and} \quad
\lambda=x^H(\alpha)Ax(\alpha),
\end{equation}
and the triplet $(\mu,\lambda,x(\alpha))$ satisfies \eqref{eq:1a}.

Since there exist infinitely many $\alpha$ with $|\alpha|=1$,
the 2DEVP (\ref{2deig}) seems to possess infinite number of
2D-eigenvalues.
However, this does not imply that any triplet
$(\mu, \lambda, x(\alpha))$ defined
in \eqref{eq:xform} and \eqref{eq:mulambda2by2}
is a 2D-eigentriplet of 2DEVP~\eqref{2deig}
since only {\em real} pairs $(\mu,\lambda)$
are of interest.

Obviously, $\lambda$ in \eqref{eq:mulambda2by2} is always real.
By straightforward calculation, we have
\begin{equation}\label{eq:mu2by2}
\mu=\dfrac{a_{11}
-a_{22}+(c_1\alpha a_{12}+c_2\overline{\alpha a_{12}})/\sqrt{-c_1c_2}}
{c_1-c_2}.
\end{equation}
Since $c_1>0$ and $c_2<0$, $\mu$ is real if and only if $\alpha a_{12}$ is real.
There are two cases:
\begin{itemize}
\item $a_{12}\not=0$. By choosing
$\alpha_{1,2}=\pm |a_{12}|/a_{12}$, then we have
\begin{align} \label{eq:mu22}
\mu_{1,2}= \dfrac{a_{11}-a_{22}\pm|a_{12}|(c_1+c_2)/\sqrt{-c_1c_2}}{c_1-c_2},
\end{align}
and
\begin{equation} \label{eq:lambda22}
\lambda_{1,2}
=\dfrac{a_{11}/c_1 -a_{22}/c_2 \pm 2|a_{12}|/\sqrt{-c_1 c_2}}{(c_1-c_2)/(-c_1 c_2)}.
\end{equation}
and
$$
x_{1,2} = x(\alpha_{1,2}).
$$
Therefore, the 2DEVP~\eqref{2deig}
has exactly two 2D-eigentriplets $(\mu_1,\lambda_1,x_1)$
and $(\mu_2,\lambda_2,x_2)$.
In addition, we note that
$\lambda_1$ and $\lambda_2$ are
simple eigenvalues of $A-\mu_1 C$ and $A-\mu_2 C$, respectively.

\item $a_{12}=0$.
The 2D-eigentriplets are given by
\begin{equation} \label{eq:mulambda22}
(\mu_1, \lambda_1, x_1(\alpha)) \equiv
\left(\dfrac{a_{11}-a_{22}}{c_1- c_2}, \
\dfrac{a_{22}c_1-a_{11}c_2}{c_1-c_2}, \
x(\alpha) \right)
\end{equation}
for any $\alpha \in \mathbb{C}$ with $|\alpha|=1$.
There are infinitely many eigentriplets with
the same 2D-eigenvalue.
In addition, $\lambda_1$ is an eigenvalue of $A-\mu_1 C$ 
with multiplicity 2.
\end{itemize}

If we discard the equation~\eqref{eq:1b}, the 2DEVP
becomes a parameter eigenvalue problem
of $H(\mu) = A-\mu C$. There are also two cases:
\begin{itemize}
\item $a_{12}\not=0$.  In this case, we have two smooth eigencurves
$\lambda_1(\mu)$ and $\lambda_2(\mu)$ of $H(\mu)$:
\[
\lambda_{1,2}(\mu) =
\frac{1}{2}\left(
{a_{11}+a_{22}-\mu(c_1+c_2)\pm
\sqrt{[(a_{11}-a_{22})-\mu(c_1-c_2)]^2+4\left|a_{12}\right|^2}} \right).
\]
By setting $\lambda^{\prime}_{1,2}(\mu) = 0$, we have the following
two stationary points of eigencurves $\lambda_1(\mu)$ and $\lambda_2(\mu)$:
\begin{align*}
{\mu}_{1,2} & = \dfrac{a_{11}-a_{22}\pm |a_{12}|(c_1+c_2)
/\sqrt{-c_1c_2}}{c_1-c_2}.
\end{align*}
By \eqref{eq:mu22}, $({\mu}_1,\lambda_1({\mu}_1))$
and $({\mu}_2,\lambda_2({\mu}_2))$ are 2D-eigenvalues. 

\item $a_{12}=0$. In this case, eigencurves 
$\lambda_1(\mu)$ and $\lambda_2(\mu)$ are
\[
\lambda_{1,2}(\mu) =
\frac{1}{2}\left( {a_{11}+a_{22}-\mu(c_1+c_2)\pm
\left|a_{11}-a_{22}-\mu(c_1-c_2)\right|} \right).
\]
Moreover, $\lambda_1(\mu)$ and $\lambda_2(\mu)$
intersect and are not differentiable
at the intersection point
${\mu}_{1} = {\mu}_{2} =  \dfrac{a_{11}-a_{22}}{c_1-c_2}$.
Furthermore, since
$\left|c_1+c_2\right| < \left|c_1-c_2\right|$,
${\mu}_1$ and ${\mu}_2$
are the minimum and maximum of
eigencurves $\lambda_1(\mu)$ and
$\lambda_2(\mu)$, respectively.
By \eqref{eq:mulambda22},
$({\mu}_1,\lambda_1({\mu}_1))$
and
$({\mu}_2,\lambda_2({\mu}_2))$
are the 2D-eigenvalues. 
\end{itemize}


\begin{example}\label{2by2examplePlot}
{\rm
Consider the 2DEVP \eqref{2deig} with
\[
 A = \begin{bmatrix}
     1 & a_{12} \\
     a_{12} & 1
   \end{bmatrix},\quad
 C = \begin{bmatrix}
       0.2 & 0 \\
       0 & -0.5
     \end{bmatrix}.
\]
where $a_{12} \in \mathbb{R}$. 
Figure~\ref{figTwobytwo} shows
the eigenfunctions $\lambda_1(\mu)$ and $\lambda_2(\mu)$
of $A-\mu C$ with $a_{12} \neq 0$ and $a_{12} =0$, 
where $\lambda_1(\mu)$ and $\lambda_2(\mu)$ are sorted such that 
$\lambda_1(\mu)\geq\lambda_2(\mu)$.
The 2D-eigenvalues $(\mu_{\ast}, \lambda_{\ast})$
are marked by red circles. We observe that
\begin{enumerate}
\item in the case $a_{12} \neq 0$,
$\lambda_1(\mu)$ and $\lambda_2(\mu)$ are
differentiable and the 2D-eigenvalues correspond to 
stationary points of $\lambda_1(\mu)$ and $\lambda_2(\mu)$,
see Figure~\ref{figTwobytwo}(a), 

\item in the case $a_{12}= 0$, $\lambda_1(\mu)$ and $\lambda_2(\mu)$
are not differentiable at $\mu_*$, but the 2D-eigenvalue still
corresponds to minimum and maximum of $\lambda_1(\mu)$ and $\lambda_2(\mu)$,
see Figure~\ref{figTwobytwo}(b). 
\end{enumerate}
In addition, we note that if $\mu$ is not restricted to be real, then 
the red line in Figure~\ref{figTwobytwo}(a) is a plot $(\real(\mu), \lambda)$
calculated from \eqref{eq:mulambda2by2} with $\mu \in \mathbb{C}$. 
Therefore, if $\mu$ is not restricted to be real, the 2DEVP has 
continuous spectrum.
\hfill $\Box$

\begin{figure}[tbhp]
\centering
   \subfloat[The case $a_{12} = 0.2$]{
   \includegraphics[width=0.45\textwidth]{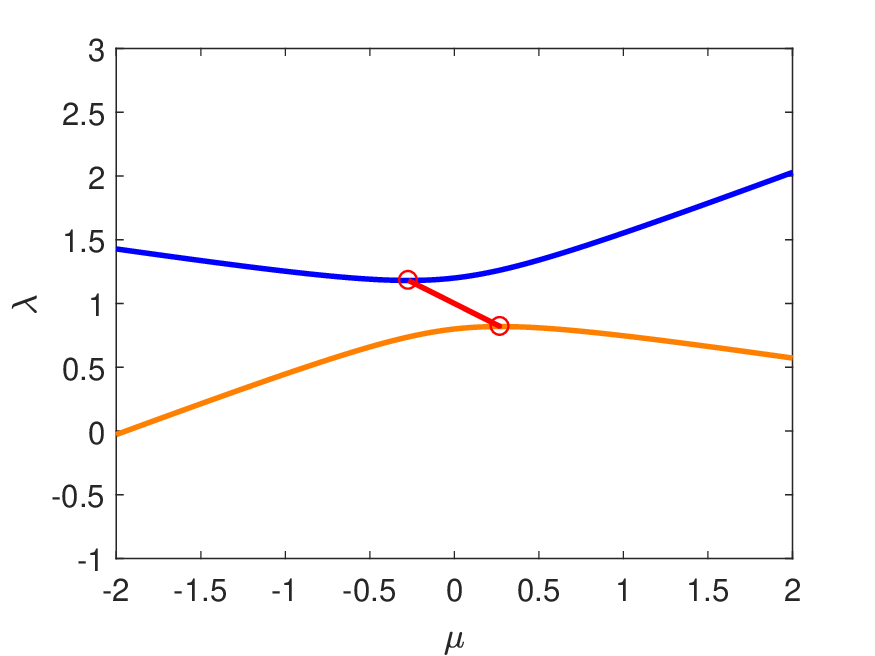}
   \label{fig:a}
   }\hs
  \subfloat[The case $a_{12}= 0$]{
   \includegraphics[width=0.45\textwidth]{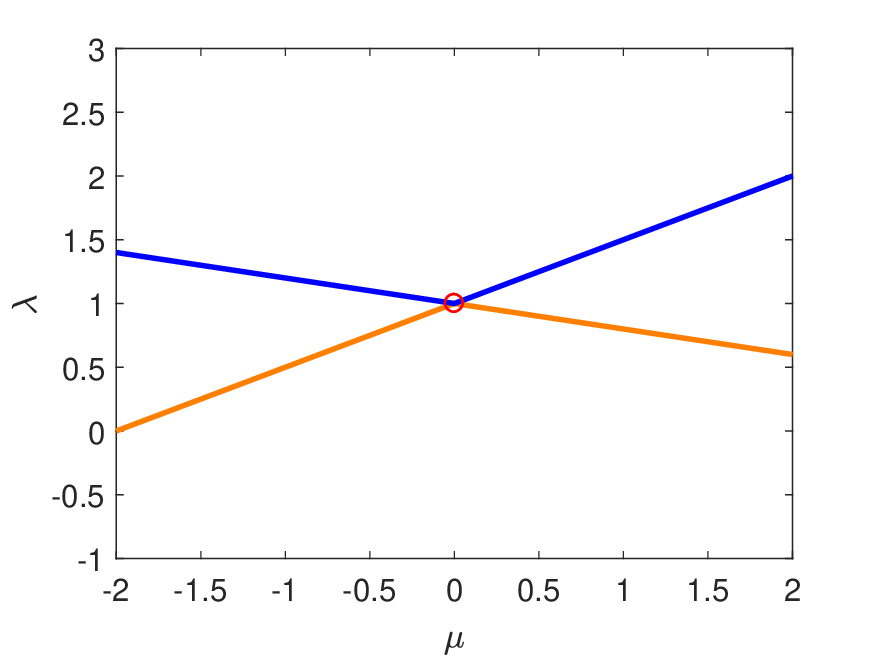}
   \label{fig:b}
   }
  \caption{
Examples of 2-by-2 2DEVPs.
The blue and orange curves are
eigenvalues $\lambda_1(\mu)$ and $\lambda_2(\mu)$
of $A-\mu C$ sorted such that
$\lambda_1(\mu)\geq\lambda_2(\mu)$.
The 2D-eigenvalues $(\mu_{\ast}, \lambda_{\ast})$
are marked by red circles.
} \label{figTwobytwo}
\end{figure}

} \end{example}

\section{The associated parameter
eigenvalue problem} \label{sec_parameterEVP}

If we discard the equation \eqref{eq:1b},
the remaining two equations of the 2DEVP~\eqref{2deig}
are a parameter eigenvalue problem of
$H(\mu) = A-\mu C$ with real parameter $\mu$.
In this section, we study
the properties of this associated parameter
eigenvalue problem.

For $\mu\in\mathbb{R}$, there exist $n$ real eigenvalues $\lambda_i(\mu)$ and
corresponding orthonormal eigenvectors $x_i(\mu)$ of $A-\mu C$. 
If $\lambda_i(\mu)$ are sorted 
such that $\lambda_1(\mu)\ge \cdots \ge \lambda_n(\mu)$, then we
have $n$ {\em sorted eigencurves} $\lambda_i(\mu)$ for $i = 1, 2, \ldots, n$.
The following theorem is a well-known result on 
the convexity of the extreme eigencurves 
$\lambda_1(\mu)$ and $\lambda_n(\mu)$, 
see e.g. \cite{Fan1949,Lewis1996,Overton1991}.

\begin{theorem} \label{thm_convex}
$\lambda_{1}(\mu)$ is convex and $\lambda_{n}(\mu)$ is concave.
\end{theorem}


The sorted eigencurves $\lambda_i(\mu)$ are
continuous and may be non-differentiable on intersections,
see Figure~\ref{fig_reordered}(a).
The following theorem is a direct application of
Theorem S6.3 in \cite{Gohberg2009Matrix} and shows that
with proper reordering, the eigencurves $\lambda_i(\mu)$
can be analyticalized.

\begin{theorem}[\cite{Gohberg2009Matrix}]\label{analytic_prop}
For Hermitian matrices $A$ and $C$,
there exist scalar functions $\lambda_1(\mu)$, $\cdots$, $\lambda_n(\mu)$
and matrix-valued functions
$X(\mu)=\begin{bmatrix}x_1(\mu),& \cdots, & x_n(\mu)\end{bmatrix}$ 
such that for $\mu\in\mathbb{R}$,
\begin{equation}\label{analytic_eigenfuncs}
\begin{aligned}
A-\mu C & =X(\mu)
\diag\begin{bmatrix}\lambda_1(\mu),&\cdots,&\lambda_n(\mu)\end{bmatrix}X^H(\mu), \\
X^H(\mu)X(\mu) &=I,
\end{aligned}
\end{equation}
and furthermore, $\lambda_i(\mu)$ and $x_i(\mu)$ are real analytic on $\mu\in\mathbb{R}$.
\end{theorem}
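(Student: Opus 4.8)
The statement is \cite[Theorem S6.3, p.\,394]{Gohberg2009Matrix} specialized to the present pencil, so the plan is to check that its hypotheses are met and to record why they are essential. First I would note that $\mu\mapsto A-\mu C$ is a matrix-valued function that is analytic on all of $\mathbb{R}$ --- indeed a polynomial of degree one, which extends to an entire function of complex $\mu$ --- and that for every real $\mu$ the matrix $A-\mu C$ is Hermitian, since $A$ and $C$ are. We are therefore in the classical setting of analytic self-adjoint perturbation (Rellich's theorem), and the cited result applies verbatim.

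Second, I would isolate the one point that cannot be skipped: the sorted roots $\lambda_1(\mu)\ge\cdots\ge\lambda_n(\mu)$ of the characteristic polynomial $\det\bigl(\lambda I-(A-\mu C)\bigr)$, whose coefficients are polynomials in $\mu$, can coalesce at eigencurve crossings, and at such a point $\mu_0$ the branches of a colliding group are a priori only Puiseux series in $(\mu-\mu_0)^{1/k}$ for some $k>1$. The key is that Hermiticity forces every such series to be an ordinary power series: a genuine fractional power would make the corresponding eigenvalue non-real for some real $\mu$ arbitrarily close to $\mu_0$, contradicting the fact that $A-\mu C$ is Hermitian for real $\mu$. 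This is precisely the mechanism that fails for non-Hermitian analytic families, and it is why the theorem is special to the self-adjoint case. Having removed the apparent singularities, one gets locally analytic scalar functions near every $\mu_0$; for the eigenvectors one then uses the Riesz spectral projection $P(\mu)=\frac{1}{2\pi{\tt i}}\oint_{\Gamma}(zI-(A-\mu C))^{-1}\,{\rm d}z$ onto the invariant subspace of an isolated group of eigenvalues, which is analytic in $\mu$, followed by an analytic Gram--Schmidt orthonormalization inside that subspace to produce analytic orthonormal $x_i(\mu)$.

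Finally, I would continue the local data along $\mathbb{R}$: because $\mathbb{R}$ is an interval, the local analytic branches glue into globally analytic functions $\lambda_1(\mu),\dots,\lambda_n(\mu)$ and $x_1(\mu),\dots,x_n(\mu)$, and assembling the latter as the columns of $X(\mu)$ yields $X^H(\mu)X(\mu)=I$ together with the factorization \eqref{analytic_eigenfuncs}. The only bookkeeping subtlety --- and the main ``obstacle,'' though it is not a genuine one --- is the global labeling: the analytic $\lambda_i(\mu)$ need not coincide with the sorted eigencurves discussed above, which is exactly the distinction between sorted and analyticalized eigencurves that the paper is introducing and which is immaterial to the existence statement proved here.
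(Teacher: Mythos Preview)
The paper does not give its own proof of this theorem: it simply states the result as ``a direct application of \cite[Theorem S6.3, p.\,394]{Gohberg2009Matrix}'' and moves on. Your proposal is therefore not in conflict with the paper's approach --- you are doing exactly what the paper does (invoke the cited reference) and then adding a correct sketch of the underlying Rellich--Kato argument (Puiseux branches forced to be integer powers by Hermiticity, analytic Riesz projections, analytic Gram--Schmidt, continuation along the interval). This extra content is accurate and pedagogically useful, and your closing remark about the distinction between sorted and analyticalized eigencurves anticipates precisely the point the paper makes immediately after stating the theorem.
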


By \eqref{analytic_eigenfuncs}, 
$(\lambda_i(\mu),x_i(\mu))$ are eigenpairs of $A-\mu C$.
The real analytic eigencurves $\lambda_i(\mu)$ in Theorem~\ref{analytic_prop} will be called
{\em analyticalized eigencurves}. 
Analyticalized eigencurves may be different from
sorted eigencurves as illustrated in Figure~\ref{fig_reordered}.
For clarification, in the rest of the paper, we use
$\lambda_i(\mu)$ to denote a sorted eigencurve of
$A-\mu C$, and $\widetilde{\lambda}_i(\mu)$ to denote
an analyticalized eigencurve of $A-\mu C$.

\begin{figure}[tbhp]
\centering
\subfloat[Sorted eigencurves]{
\includegraphics[width=0.45\textwidth]{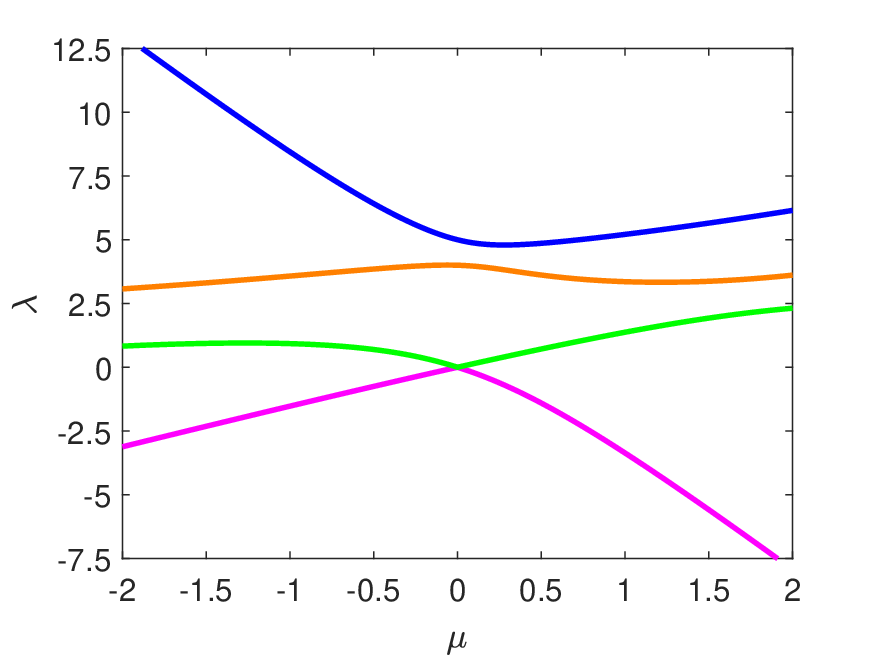}}\hs
\subfloat[Analyticalized eigencurves]{
\includegraphics[width=0.45\textwidth]{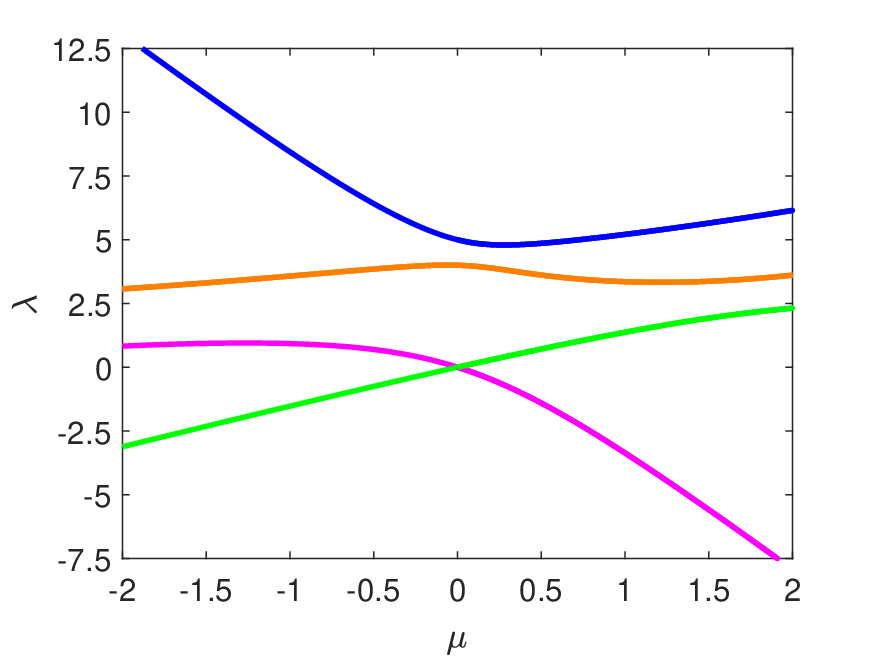}}
\caption{Illustration of eigencurves.}
\label{fig_reordered}
\end{figure}

Recall that a function $f(\mu)$ is called real analytic \cite[p.\,3, Definition~1.1.5]{Krantz2002} on an open set $U\subseteq\mathbb{R}$, if and only if $f(\mu)$ can be expanded into a convergent power series in a neighbour centered at any $\mu_0\in\mathbb{U}$.
The following theorem lists some properties 
of a real analytic function, see \cite[pp.~4, 9, 10, 14, 19]{Krantz2002}
for details.

\begin{theorem}[\cite{Knopp1990theoryI}]
\label{analytic_properties}
Let $f(\mu)$ be real analytic on $U$. Then
\begin{enumerate}[(i)]

  \item $f\in\mathrm{C}^{\infty}(U)$.
  \item\label{item:product} The sum and product of two real analytic functions on $U$ are also real analytic on $U$.
  \item\label{item:fprime} $f'(\mu)$ is also real analytic on $U$.
  Furthermore, if $f(\mu)=\sum_{i=0}^{+\infty}\alpha_i(\mu-\mu_0)^i$ is 
the power series expansion of $f$ at $\mu_0$, then 
$f'(\mu)=\sum_{i=1}^{+\infty}i\alpha_i(\mu-\mu_0)^{i-1}$
is the power series expansion of $f'$ with the same radius of convergence.
\item\label{item:uniquess} Assume $U$ is an interval. If $g(\mu)$ is also real analytic on $U$, and there exists a sequence $\{\mu_k\}_{k=1}^{\infty}\subseteq U$, where $u_k$ are distinct and $\lim\limits_{k\rightarrow\infty}\mu_k=\mu_*\in U$ such that $f(\mu_k)=g(\mu_k)$, then $f \equiv g$ on $U$. This is known as the identity theorem for analytic functions.
\item\label{item:component} Assume $U$ is an open interval on $\mathbb{R}$, and $f$ takes values in an open interval $V\subseteq \mathbb{R}$. Let $g$ be real analytic on $V$. Then $g\circ f$ is real analytic on $U$.
\end{enumerate}
\end{theorem}

One important benefit of introducing analyticalized eigencurves
$\widetilde{\lambda}_i(\mu)$ is that we can
have the notion of derivatives of the sorted eigencurves 
at any point $\mu \in \mathbb{R}$, even the one corresponds 
to the intersection of the sorted eigencurves.  
To that end, let us first present the following theorem derived from 
Theorem~1 of \cite[p.45]{Rel1969} 
to show that the derivatives of analyticalized eigencurves 
$\widetilde{\lambda}_i(\mu)$ can be calculated through 
solving an eigenvalue problem. 

%
%

\begin{theorem}[\cite{Rel1969}] \label{analytic_prop3}
Let $\widetilde{\lambda}_1(\mu),\cdots,\widetilde{\lambda}_n(\mu)$ be
the analyticalized eigencurves of $A-\mu C$.
Assume $\lambda_0$ is an eigenvalue of $A-\mu_0C$ with algebraic
multiplicity $k$, i.e., $\widetilde{\lambda}_j(\mu_0)=\lambda_0$
for $p\leq j\leq p+k-1$ with some integer $p \geq 1$.
Let $X_k$ be an orthonormal eigen-subspace corresponding to 
$\lambda_0$. Then by counting multiplicities, $\widetilde{\lambda}'_j(\mu_0)$ 
has one-to-one correspondence with the eigenvalues of $-X_k^HCX_k$ 
for $p\leq j\leq p+k-1$. 
\end{theorem}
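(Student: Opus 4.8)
The plan is to reduce the statement to a first-order perturbation analysis of the analyticalized eigencurves $\widetilde\lambda_j(\mu)$ near $\mu_0$, using the analyticity guaranteed by Theorem~\ref{analytic_prop}. Since $\widetilde\lambda_j(\mu)$ and $x_j(\mu)$ are analytic in a neighborhood of $\mu_0$, we may write the Taylor expansions $\widetilde\lambda_j(\mu) = \lambda_0 + \widetilde\lambda_j'(\mu_0)(\mu-\mu_0) + O((\mu-\mu_0)^2)$ and $x_j(\mu) = x_j(\mu_0) + x_j'(\mu_0)(\mu-\mu_0) + O((\mu-\mu_0)^2)$ for $p \le j \le p+k-1$. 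First I would differentiate the defining identity $(A-\mu C)x_j(\mu) = \widetilde\lambda_j(\mu)\, x_j(\mu)$ at $\mu = \mu_0$, obtaining
\begin{equation*}
-C x_j(\mu_0) + (A-\mu_0 C)\, x_j'(\mu_0) = \widetilde\lambda_j'(\mu_0)\, x_j(\mu_0) + \lambda_0\, x_j'(\mu_0).
\end{equation*}
Then I would left-multiply by $x_i(\mu_0)^H$ for each $i$ with $p \le i \le p+k-1$. Using $x_i(\mu_0)^H(A-\mu_0 C) = \lambda_0\, x_i(\mu_0)^H$ (since $x_i(\mu_0)$ is also an eigenvector for $\lambda_0$) and the orthonormality $x_i(\mu_0)^H x_j(\mu_0) = \delta_{ij}$, the terms involving $x_j'(\mu_0)$ cancel, and one is left with
\begin{equation*}
-x_i(\mu_0)^H C\, x_j(\mu_0) = \widetilde\lambda_j'(\mu_0)\, \delta_{ij}.
\end{equation*}
In matrix form this reads $-X_k^H C X_k = \diag(\widetilde\lambda_p'(\mu_0), \ldots, \widetilde\lambda_{p+k-1}'(\mu_0))$, where the columns of $X_k$ are $x_p(\mu_0), \ldots, x_{p+k-1}(\mu_0)$.

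From here the conclusion follows: $-X_k^H C X_k$ is a $k\times k$ Hermitian matrix that is \emph{already diagonal} in the basis furnished by the analyticalized eigenvectors, with diagonal entries exactly the derivatives $\widetilde\lambda_j'(\mu_0)$; hence its eigenvalues are precisely $\{\widetilde\lambda_j'(\mu_0) : p \le j \le p+k-1\}$, counting multiplicities. Since $X_k$ is one particular orthonormal basis of the eigen-subspace of $\lambda_0$, and the eigenvalues of $-\widehat X_k^H C \widehat X_k$ are independent of the choice of orthonormal basis $\widehat X_k$ of that subspace (any two such bases differ by a $k\times k$ unitary, which is a similarity on $-\widehat X_k^H C \widehat X_k$), the statement holds for the orthonormal eigen-subspace $X_k$ as stated in the theorem. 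This is essentially Theorem~1 of \cite[p.\,45]{Rel1969} specialized to the linear pencil $A - \mu C$; the only thing that needs care is citing/justifying the existence of analytic eigenvectors $x_j(\mu)$ (not merely analytic eigenvalues), which is exactly what Theorem~\ref{analytic_prop} provides, so the differentiation step above is legitimate even when $\lambda_0$ is a multiple eigenvalue.

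The main obstacle — and the reason the analyticalized rather than sorted eigencurves are indispensable here — is that at a crossing point $\mu_0$ the sorted eigencurves $\lambda_i(\mu)$ need not be differentiable, so naive first-order perturbation theory fails; one must know in advance that a smooth choice of eigenpairs through $\mu_0$ exists. Once Theorem~\ref{analytic_prop} supplies that, the computation is the routine one above. A secondary technical point to state cleanly is why, after left-multiplying by $x_i(\mu_0)^H$, the contribution $x_i(\mu_0)^H (A-\mu_0 C) x_j'(\mu_0)$ equals $\lambda_0\, x_i(\mu_0)^H x_j'(\mu_0)$ and thus cancels against $\lambda_0\, x_i(\mu_0)^H x_j'(\mu_0)$ on the right-hand side — this uses only that $A - \mu_0 C$ is Hermitian and $x_i(\mu_0)$ lies in its $\lambda_0$-eigenspace, requiring no information about the (possibly nondifferentiable) behavior of other eigencurves. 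Finally I would remark that the off-diagonal vanishing ($i \ne j$) is the analytic analogue of the classical fact that a smooth family of eigenvectors automatically diagonalizes the first-order perturbation within a degenerate eigenspace, which is precisely the content being transcribed from \cite{Rel1969}.
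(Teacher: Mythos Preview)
Your proof is correct and follows essentially the same approach as the paper's. The only cosmetic difference is that the paper, instead of differentiating the eigenvalue equation directly, rewrites it as $(A-\mu_0C-\lambda_0I)x_j(\mu)-(\mu-\mu_0)Cx_j(\mu)-(\widetilde\lambda_j(\mu)-\lambda_0)x_j(\mu)=0$, left-multiplies by $x_i(\mu_0)^H$ (which kills the first term), divides by $\mu-\mu_0$, and lets $\mu\to\mu_0$; this avoids ever writing $x_j'(\mu_0)$ but yields the identical relation $-x_i(\mu_0)^HCx_j(\mu_0)=\widetilde\lambda_j'(\mu_0)\,x_i(\mu_0)^Hx_j(\mu_0)$, after which the paper writes $x_j(\mu_0)=X_kz_j$ and reads off that $z_j$ is an eigenvector of $-X_k^HCX_k$ --- the same content as your diagonalization-plus-unitary-invariance remark.
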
 
\begin{proof}
For the sake of completeness, we provide a proof here.
We begin with the following identity: 
\begin{equation}
(A-\mu_0C-\lambda_0I)x_j(\mu)-(\mu-\mu_0) Cx_j(\mu)-
(\widetilde{\lambda}_j(\mu)-\lambda_0)x_j(\mu)=0
\end{equation}
for $p\leq j\leq p+k-1$.
Multiplying $x^H_i(\mu_0)$ on the left and dividing by $\mu-\mu_0$, 
we have
\[
-x^H_i(\mu_0)Cx_j(\mu)=
\frac{\widetilde{\lambda}_j(\mu)-\lambda_0}{\mu-\mu_0}x^H_i(\mu_0)x_j(\mu).
\]
Let $\mu\rightarrow\mu_0$, we obtain
\begin{equation}\label{midequ}
-x^H_i(\mu_0)Cx_j(\mu_0)=\widetilde{\lambda}_j'(\mu_0)x^H_i(\mu_0)x_j(\mu_0).
\end{equation}
Since $\{x_i(\mu_0)\}_{i=p}^{p+k-1}$ is a basis of $X_k$, 
\eqref{midequ} is equivalent to
\begin{equation} \label{eq:XC} 
-X_k^HCx_j(\mu_0)=\widetilde{\lambda}_j'(\mu_0)X_k^Hx_j(\mu_0).
\end{equation} 
Furthermore, there exists $z_j$, such that $x_j(\mu_0)=X_kz_j$ 
and $\{z_j\}_{j=p}^{p+k-1}$ are orthonormal. 
Thus the equation \eqref{eq:XC} is turned into
\[
-X_k^HCX_kz_j=\widetilde{\lambda}_j'(\mu_0)z_j.
\]
This implies that $\widetilde{\lambda}_j'(\mu_0)$
for $p\leq j\leq p+k-1$ have
one-to-one correspondence with eigenvalues of
$-X_k^HCX_k$, counting multiplicities.
\end{proof}

The following theorem is an application of Theorem~\ref{analytic_prop3}
to show that the sorted eigencurves $\lambda_j(\mu)$ have 
well-defined one-sided derivatives.
	\begin{theorem}\label{thm:onesidedderivative}
	Assume $(\mu_*,\lambda_*)$
	is an intersection of $k$ sorted eigencurves, i.e., 
	$\lambda_j(\mu_*)=\lambda_*$ for $p\leq j\leq p+k-1$ for some 
	integer $p\geq 1$, and $\lambda_{p-1}(\mu_*)>\lambda_*>\lambda_{p+k}(\mu_*)$. 
	Let $X_k$ be an orthonormal eigen-subspace of 
	the eigenvalue $\lambda_*$ of $A-\mu_* C$.
	Then for $p\leq j\leq p+k-1$, the one-sided derivatives 
	\[
	\lambda^{'(+)}_{j}(\mu_*)
	\equiv  \lim\limits_{t\rightarrow 0^+}
	\frac{\lambda_j(\mu_*+t)-\lambda_j(\mu_*)}{t}
	\quad \mbox{and} \quad
	\lambda^{'(-)}_{j}(\mu_*)
	\equiv  \lim\limits_{t\rightarrow 0^-}
	\frac{\lambda_j(\mu_*+t)-\lambda_j(\mu_*)}{t},
	\]
	exist. Furthermore, both multisets 
	$\{\lambda^{'(-)}_{j}(\mu_*) \mid p\leq j\leq p+k-1\}$ and 
	$\{\lambda^{'(+)}_{j}(\mu_*)\ |\ p\leq j\leq p+k-1\}$ have 
	one-to-one correspondence with the multiset of eigenvalues of 
	$-C_k \equiv -X^H_k C X_k$, i.e, if the eigenvalues of 
	$-C_k \equiv -X^H_k C X_k$ 
	are $\tau_1\geq\tau_2\geq\cdots\geq\tau_k$, then 
	$\lambda^{'(+)}_{p-1+i}(\mu_*)
	=\tau_{i}=\lambda^{'(-)}_{p+k-i}(\mu_*)$ for $i=1,\cdots,k$.
\end{theorem}
\begin{proof} 
	We first prove by contradiction that there exists $r>0$, such that
	in the interval $(\mu_*, \mu_*+r)$, for any $i,j$, 
	there are only following two cases between any two 
	analyticalized eigencurves 
	$\widetilde{\lambda}_i(\mu)$ and $\widetilde{\lambda}_j(\mu)$ 
	of $A-\mu C$: 
	\[
	\widetilde{\lambda}_i(\mu)=\widetilde{\lambda}_j(\mu)
	\quad \mbox{or} \quad 
	\widetilde{\lambda}_i(\mu)\neq\widetilde{\lambda}_j(\mu)
	\]
	for any $\mu\in(\mu_*, \mu_*+r)$. 
	If $r$ does not exist, then we can find a fixed pair 
	$(i,j)$ and a sequence $\{\mu_m\}_{m=1}^{\infty}$
	such that $\widetilde{\lambda}_i(\mu_m)=\widetilde{\lambda}_j(\mu_m)$,
	$\mu_m\rightarrow\mu_*$, $\mu_m\neq\mu_*$ but
	$\widetilde{\lambda}_i(\mu)\not\equiv\widetilde{\lambda}_j(\mu)$,
	which contradicts Theorem~\ref{analytic_properties}\eqref{item:uniquess}. 
	
	We next prove that in the interval $[\mu_*, \mu_*+r)$, 
	each sorted eigencurve identically equals to an analyticalized eigencurve. 
	In fact, we have proved that in the interval $(\mu_*, \mu_*+r)$, 
	two analyticalized eigencurves that do not equal identically will not 
	intersect. Then by the continuity of analyticalized eigencurves, 
	for any $i,j$, there are exactly the following three cases 
	that hold for all $\mu\in(\mu_*, \mu_*+r)$:
	\[
	\widetilde{\lambda}_i(\mu)<\widetilde{\lambda}_j(\mu)
	\quad \mbox{or} \quad 
	\widetilde{\lambda}_i(\mu)=\widetilde{\lambda}_j(\mu)
	\quad \mbox{or} \quad 
	\widetilde{\lambda}_i(\mu)>\widetilde{\lambda}_j(\mu).
	\]
	This implies in the interval $(\mu_*, \mu_*+r)$, the algebraic order 
	of the analyticalized eigencurves are preserved. Thus we can 
	find a permutation $\{\ell_1,\ell_2,\cdots,\ell_n\}$ of 
	$\{1,2,\cdots,n\}$, such that 
	$\widetilde{\lambda}_{\ell_i}(\mu) = \lambda_i(\mu)$ 
	for $\mu\in(\mu_*,\mu_*+r)$ and $ i=1,\cdots,n$. By continuity, 
	$\widetilde{\lambda}_{\ell_i}(\mu_*) = \lambda_i(\mu_*)$, 
	for $i=1,\cdots,n$. Consequently, for $p\leq j\leq p+k-1$, 
	the limit
	$$
	\lim\limits_{t\rightarrow 0^+}
	\frac{\lambda_j(\mu_*+t)-\lambda_j(\mu_*)}{t}
	$$
	exist and equals to $\widetilde{\lambda}_{\ell_j}'(\mu_*)$. By Theorem~\ref{analytic_prop3}, the multiset $\{ \lambda^{'(+)}_{j}(\mu_*) \ \big|\ p\leq j\leq p+k-1 \}$ have one-to-one correspondence with the multiset of eigenvalues of $-C_k$. By a similar argument, we can show that the limit $\lambda^{'(-)}_{j}(\mu_*)$ exists and has one-to-one correspondence with the eigenvalues of $-C_k$, counting multiplicities.
	
	Furthermore, note that for $t>0$ and $p\leq j\leq p+k-2$,
	\[
	\frac{\lambda_j(\mu_*+t)-\lambda_j(\mu_*)}{t}
	\geq\frac{\lambda_{j+1}(\mu_*+t)-\lambda_{j+1}(\mu_*)}{t}
	\]
	and
	\[ 
	\frac{\lambda_j(\mu_*-t)-\lambda_j(\mu_*)}{-t}\leq
	\frac{\lambda_{j+1}(\mu_*-t)-\lambda_{j+1}(\mu_*)}{-t}.
	\]
	Thus for $p\leq j\leq p+k-2$, 
	\begin{equation}\label{eq:onesidedderivativeComp}
		\lambda^{'(+)}_{j}(\mu_*)\geq \lambda^{'(+)}_{j+1}(\mu_*)
		\quad \mbox{and} \quad 
		\lambda^{'(-)}_{j}(\mu_*)\leq \lambda^{'(-)}_{j+1}(\mu_*).
	\end{equation}
	Then the equality 
	$\lambda^{'(+)}_{p-1+i}(\mu_*) =\tau_{i}=\lambda^{'(-)}_{p+k-i}(\mu_*)$ 
	follows from the equation \eqref{eq:onesidedderivativeComp} 
	and the correspondence between 
	$\{ \lambda^{'(+)}_{j}(\mu_*) \ \big|\ p\leq j\leq p+k-1 \}$, 
	$\{ \lambda^{'(-)}_{j}(\mu_*) \ \big|\ p\leq j\leq p+k-1 \}$ 
	and $\{ \tau_i \ \big|\ 1\leq i\leq k \}$.
\end{proof}

When $k = 1$ in Theorem~\ref{thm:onesidedderivative}, 
the argument in the proof shows the following result.  

\begin{corollary}\label{cor:simple}
	Under the notation in Theorem~\ref{thm:onesidedderivative}, 
	if $k=1$, $\lambda_p(\mu)$ is differentiable at $\mu_*$ 
	and satisfies $\lambda_p'(\mu_*) = -x_p(\mu_*)^HCx_p(\mu_*)$, 
	where $x_p(\mu_*)$ is the corresponding unit eigenvector of $\lambda_p(\mu_*)$.
\end{corollary}

\section{Existence and variational characterization of 2D-eigenvalues} 
\label{sec:variationProp}

In this section, we discuss the existence of the 2D-eigenvalues
and their variational characterizations. The following theorem characterizes when $(\mu_*,\lambda_*)$ is a 2D eigenvalue.
\begin{theorem} \label{Thm:what_is_2devl}
	Let $(\mu_*,\lambda_*)$ be an intersection of $k$ analyticalized eigencurves $\widetilde{\lambda}_i(\mu)$ of $A-\mu C$, $i=1,\cdots,k$. 
	\begin{itemize}
		\item If $k=1$, then $(\mu_*,\lambda_*)$ is a 2D eigenvalue if and only if $\widetilde{\lambda}_1'(\mu_*)=0$;
		\item If $k>1$, then $(\mu_*,\lambda_*)$ is a 2D eigenvalue if and only if there exist $i,j\leq k$, such that $\widetilde{\lambda}_i'(\mu_*)\widetilde{\lambda}_j'(\mu_*)\leq0$.
	\end{itemize}
\end{theorem}

\begin{proof}
	Denote $X_k$ as the orthonormal basis of the eigen-subspace of the eigenvalue $\lambda_*$ of $A-\mu_*C$, and $C_k=X_k^HCX_k$.
	According to \eqref{2deig}, $(\mu_*,\lambda_*)$ is a 2D eigenvalue if and only if there exists nonzero vector $x_*\in\Span\{X_k\}$, such that $x_*^HCx_*=0$. Denote $x_*=X_kz$, then $x_*^HCx_*=0$ becomes $z^HC_kz=0$. Thus $(\mu_*,\lambda_*)$ is a 2D eigenvalue if and only if $-C_k$ is not a definite matrix.
	
	When $k=1$, $-C_k$ is not a definite matrix if and only if $C_k=0$. According to Theorem~\ref{analytic_prop3}, this is equivalent to $\widetilde{\lambda}_1'(\mu_*)=0$.
	
	When $k>1$, $-C_k$ is not a definite matrix if and only if $-C_k$ has both negative and positive eigenvalues or has eigenvalue 0. According to Theorem~\ref{analytic_prop3}, this is equivalent to there existing $i,j\leq k$, such that  $\widetilde{\lambda}_i'(\mu_*)\widetilde{\lambda}_j'(\mu_*)\leq0$.
\end{proof}

	\begin{theorem}\label{thm_minima_is_2devl}
	If $(\mu_*,\lambda_*)$ is a local minimum or maximum of
	a sorted eigencurve $\lambda(\mu)$ of $A-\mu C$, 
	then $(\mu_*,\lambda_*)$ must be a 2D-eigenvalue of $(A,C)$.
\end{theorem}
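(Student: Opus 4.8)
The plan is to work with the analyticalized eigencurves $\widetilde\lambda_i(\mu)$ from Theorem~\ref{analytic_prop} rather than the sorted eigencurve $\lambda(\mu)$ directly, because the sorted curve may fail to be differentiable exactly where it touches an intersection, and the local extremum $(\mu_*,\lambda_*)$ could sit at such a point. The first step is to observe that since $\lambda(\mu)$ is a sorted eigencurve, at $\mu_*$ there is some analyticalized eigencurve $\widetilde\lambda_j(\mu)$ with $\widetilde\lambda_j(\mu_*)=\lambda_*$; in fact $\lambda_*=\lambda_p(A-\mu_*C)$ for some index $p$, and the eigenvalue $\lambda_*$ of $A-\mu_*C$ has some algebraic multiplicity $k\ge 1$, with analyticalized curves $\widetilde\lambda_p(\mu),\dots,\widetilde\lambda_{p+k-1}(\mu)$ all passing through $(\mu_*,\lambda_*)$.

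Next I would use the local extremum hypothesis to extract information about the first derivatives $\widetilde\lambda_j'(\mu_*)$, $p\le j\le p+k-1$. Suppose $(\mu_*,\lambda_*)$ is a local minimum of the sorted curve $\lambda(\mu)$ (the maximum case is symmetric). For $\mu$ slightly larger than $\mu_*$ and slightly smaller than $\mu_*$, we have $\lambda(\mu)\ge\lambda_*$; since $\lambda(\mu)$ is the $p$-th largest eigenvalue and agrees locally with $\max_j\widetilde\lambda_j(\mu)$ over the block of curves through the crossing (more precisely, for the block of analyticalized curves emanating from $(\mu_*,\lambda_*)$, one of them realizes the sorted value on each side), we conclude that on at least one side the relevant analyticalized curve has derivative $\ge 0$ and on the other side $\le 0$. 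Using Theorem~\ref{analytic_prop3}, the slopes $\{\widetilde\lambda_j'(\mu_*)\}_{j=p}^{p+k-1}$ are exactly the eigenvalues of $-X_k^HCX_k$ where $X_k$ spans the eigenspace of $A-\mu_*C$ for $\lambda_*$. The extremum condition forces these eigenvalues to include both a nonpositive and a nonnegative value, i.e. $-X_k^HCX_k$ is indefinite or singular; equivalently, $X_k^HCX_k$ has eigenvalues of both signs or a zero eigenvalue. In particular there is a unit vector $z$ in $X_k$-coordinates with $z^H(X_k^HCX_k)z=0$.

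From such a $z$ I would construct the 2D-eigenvector. Set $x=X_kz$. Then $x^Hx=1$ and $x^HCx=z^H(X_k^HCX_k)z=0$, so \eqref{eq:1b} and \eqref{eq:1c} hold. For \eqref{eq:1a}, note $(A-\mu_*C)X_k=X_k\Lambda$ where $\Lambda=\lambda_*I$ on the eigenspace, hence $(A-\mu_*C)x=(A-\mu_*C)X_kz=\lambda_*X_kz=\lambda_*x$. Thus $(\mu_*,\lambda_*,x)$ satisfies all three equations of \eqref{2deig}, so $(\mu_*,\lambda_*)$ is a 2D-eigenvalue. The only subtle point needing care is establishing that the block of analyticalized curves through $(\mu_*,\lambda_*)$ produces a slope of each sign: I expect the main obstacle to be the bookkeeping that relates the sorted curve $\lambda(\mu)$ to the analyticalized curves on each side of $\mu_*$ — in particular ruling out the degenerate possibility that every analyticalized curve in the block has a strictly positive (or strictly negative) slope while the sorted curve still attains a minimum at $\mu_*$. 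This is handled by noting that whichever analyticalized curve coincides with $\lambda(\mu)$ just to the left of $\mu_*$ must be nonincreasing at $\mu_*^-$ (else $\lambda$ would dip below $\lambda_*$), giving a slope $\le 0$, and the curve coinciding with $\lambda(\mu)$ just to the right must be nondecreasing, giving a slope $\ge 0$; since all these slopes lie in the spectrum of $-X_k^HCX_k$, we get the needed sign change, and if the two slopes have strictly opposite signs a continuity/intermediate-value argument on the eigenvalues of $-X_k^HCX_k$ (or simply taking the two eigenvectors and forming an appropriate real combination) yields the isotropic vector $z$.
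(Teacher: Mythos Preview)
Your approach is essentially the same as the paper's: reduce to the analyticalized eigencurves, invoke Theorem~\ref{analytic_prop3} to identify the slopes at $\mu_*$ with the eigenvalues of $-X_k^HCX_k$, use the one-sided derivative signs forced by the local extremum to conclude $C_k=X_k^HCX_k$ is not definite, and take an isotropic vector $z$ to build the 2D-eigenvector $X_kz$. The paper splits into a simple-eigenvalue case (where the sorted curve is itself analytic near $\mu_*$ and the argument is immediate) and a multiplicity-$k$ case, whereas you treat both at once; this is fine.

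The one place you should tighten is exactly the ``bookkeeping'' you flag: the claim that the sorted curve $\lambda(\mu)$ coincides with a \emph{single} analyticalized curve on each one-sided neighbourhood $(\mu_*,\mu_*+r)$ and $(\mu_*-r,\mu_*)$. The paper makes this explicit by using the uniqueness property of analytic functions (Theorem~\ref{analytic_properties}(3)) to show that distinct analyticalized curves can only meet at isolated points unless they agree identically, so for small enough $r$ the block of curves through $(\mu_*,\lambda_*)$ has no further crossings on either side, and hence each sorted curve in the block equals one analyticalized curve there. With that in hand, the one-sided derivatives $\lambda^{'(\pm)}(\mu_*)$ of the sorted curve exist and each is an eigenvalue of $-C_k$; the extremum gives one $\le 0$ and one $\ge 0$, so $C_k$ is not definite and the isotropic $z$ exists. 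Your final ``intermediate-value/real-combination'' remark is then unnecessary: once a Hermitian matrix has eigenvalues of both signs (or a zero eigenvalue), the existence of a unit vector with zero quadratic form is immediate.
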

\begin{proof} 
		We prove the case that 
	$(\mu_*,\lambda_*)$ is a local maximum of some sorted eigencurve. The proof for the local minimum is similar.
	Assume $(\mu_*,\lambda_*)$
	is an intersection of $k$ sorted eigencurves
	$\lambda_j(\mu)$ of $A-\mu C$ for $p\leq j\leq p+k-1$ 
	with some integer $p \geq 1$. Let $\widetilde{\lambda}_1,\cdots,\widetilde{\lambda}_{k}$ be $k$ analyticalized eigencurves that satisfies $\widetilde{\lambda}(\mu_*)=\lambda_*$. Then according to Theorem~\ref{analytic_prop3} and Theorem~\ref{thm:onesidedderivative}, both multisets $\{\lambda^{'(-)}_{j}(\mu_*) \mid p\leq j\leq p+k-1\}$ and $\{\lambda^{'(+)}_{j}(\mu_*) \mid p\leq j\leq p+k-1\}$ have one-to-one correspondence with the multiset $\{\widetilde{\lambda}^{'}_{i}(\mu_*) \mid 1\leq i\leq k \}$. 
	
	Since $(\mu_*,\lambda_*)$ is a local maximum, we have 
	$\lambda^{'(+)}_{p+k-1}(\mu_*)\leq0$ and
	$\lambda^{'(-)}_{p+k-1}(\mu_*)\geq0$. By the one-to-one correspondence, $\{\widetilde{\lambda}^{'}_{i}(\mu_*) \mid 1\leq i\leq k \}$ has both non-negative and non-positive element. This implies $\widetilde{\lambda}^{'}_1=0$ when $k=1$, and there exist $i,j\leq k$ such that $\widetilde{\lambda}_i'(\mu_*)\widetilde{\lambda}_j'(\mu_*)\leq0$ when $k>1$. By Theorem~\ref{Thm:what_is_2devl}, $(\mu_*,\lambda_*)$ is a 2D eigenvalue.
	This completes the proof. 
\end{proof} 


\begin{remark}\label{re:stationary}
	{\rm 
		The proof of Theorem~\ref{thm_minima_is_2devl} is algebraic. 
		An alternative proof is to use Clarke's generalized directional 
		derivative and generalized gradient in nonsmooth 
		optimization~\cite[p.10]{Clarke1990}.
		%
		%
		%
		Specifically, if $(\mu_*,\lambda_*)$ is a stationary point 
		(locally minimum or maximum) of some sorted eigencurve $\lambda_j(\mu)$, 
		then we have the first-order optimality condition   
		\begin{equation} \label{eq:clarkecond}
			0\in\partial\lambda_j(\mu_*), 
		\end{equation}
		where $\partial\lambda_j(\mu_*)$ is Clarke's generalized 
		derivative $\partial\lambda_j(\mu)$ at $\mu_*$ of the eigencurve 
		$\lambda_j(\mu)$ \cite[p.38, Proposition~2.3.2]{Clarke1990}.
		Based on 
		Clarke's generalized derivatives of spectral functions \cite[p.585]{Lewis1996}
		and the chain rule \cite[p.42, Theorem~2.3.9]{Clarke1990}, we can derive that
		\begin{equation} \label{eq:clarkder}
			\partial\lambda_j(\mu_*) \subseteq \{ -x^H C x \mid 
			\mbox{$x$ is a unit eigenvector corresponding of $\lambda_*$ of $A-\mu_* C$}.
			\}
		\end{equation} 
		Consequently, by \eqref{eq:clarkecond} and \eqref{eq:clarkder}, we 
		conclude that if  $(\mu_*,\lambda_*)$ is a stationary point
		of some sorted eigencurve $\lambda_j(\mu)$, then there exists 
		a unit eigenvector $x_*$ corresponding to the eigenvalue 
		$\lambda_*$ of $A-\mu_*C$, such that $0 = x_*^HCx_*$. 
		Thus $(\mu_*,\lambda_*,x_*)$ is a 2D-eigentriplet of 
		the 2DEVP~\eqref{2deig}.
}\end{remark}

Theorem~\ref{thm_minima_is_2devl} shows that
if $(\mu_*,\lambda_*)$ is a local minimim or maximum of
some sorted eigencurve $\lambda(\mu)$, then
$(\mu_*,\lambda_*)$ must be a 2D-eigenvalue.
Conversely, a 2D-eigenvalue $(\mu, \lambda)$ does not
necessarily correspond to a local minimum or maximum of
a sorted eigencurve $\lambda(\mu)$ as shown in the following
example. 

\begin{example} \label{example1}
{\rm
Let
\[
A_1=\begin{bmatrix} 2 & 0 & 1 \\ 0 & 0 & 1 \\ 1 & 1 & 0 \end{bmatrix}, \quad
C_1=\begin{bmatrix} 1& 0 & 1 \\ 0 & 1 & 1\\ 1 & 1 & 0 \end{bmatrix}.
\]
The three sorted eigencurves
$\lambda_1(\mu) \geq \lambda_2(\mu) \geq \lambda_3(\mu)$
of $A_1 - \mu C_1$ are depicted in Figure~\ref{FigAllevls_singularcase1}(a) 
in blue, red and yellow, respectively.
$(\mu,\lambda,x) = (1, 0, e_3)$ is a 2D-eigentriplet and
the 2D-eigenvalue $(\mu,\lambda) = (1, 0)$ is on the 
eigencurve $\lambda_2(\mu)$. However, it is neither
a local minimum nor a maximum of $\lambda_2(\mu)$ as shown
in the close up plot in Figure~\ref{FigAllevls_singularcase1}(b).

Let
\[
A_2=\begin{bmatrix}
2 & 0 & 1 & 0\\ 0 & 0 & 1 & 0\\ 1 & 1 & 0 & 0 \\0&0&0&0.6
\end{bmatrix},
\quad
C_2=\begin{bmatrix}
1& 0 & 1 & 0\\ 0 & 1 & 1 & 0\\ 1 & 1 & 0 & 0\\0&0&0&0.6
\end{bmatrix}.
\]
The four sorted eigencurves of $A_2 - \mu C_2$
are depicted in Figure~\ref{FigAllevls_singularcase1}(c).
Note that 
\[ 
\mbox{det}\left(A_2-\mu C_2-\lambda I\right)
=(0.6-0.6\mu-\lambda)\det\left(A_1-\mu C_1-\lambda I\right).
\] 
Thus three analyticalized eigencurves of $A_2 - \mu C_2$ 
are identical to ones of $A_1 - \mu C_1$. 
The 2D-eigenvalue $(\mu,\lambda) = (1, 0)$ of $(A_2, C_2)$
corresponds to the intersection of eigencurves 
$\lambda_2(\mu)$ and $\lambda_3(\mu)$. However,
it is neither a local minimum nor a maximum of
$\lambda_2(\mu)$ and $\lambda_3(\mu)$ as shown in
the closed up plot in Figure~\ref{FigAllevls_singularcase1}(d).
\hfill $\Box$
} \end{example}


\begin{figure}[tbhp]
\centering
\subfloat[three (sorted) eigencurves $\lambda_1(\mu) \geq \lambda_2(\mu) \geq \lambda_3(\mu)$]
{\includegraphics[width=0.45\textwidth]{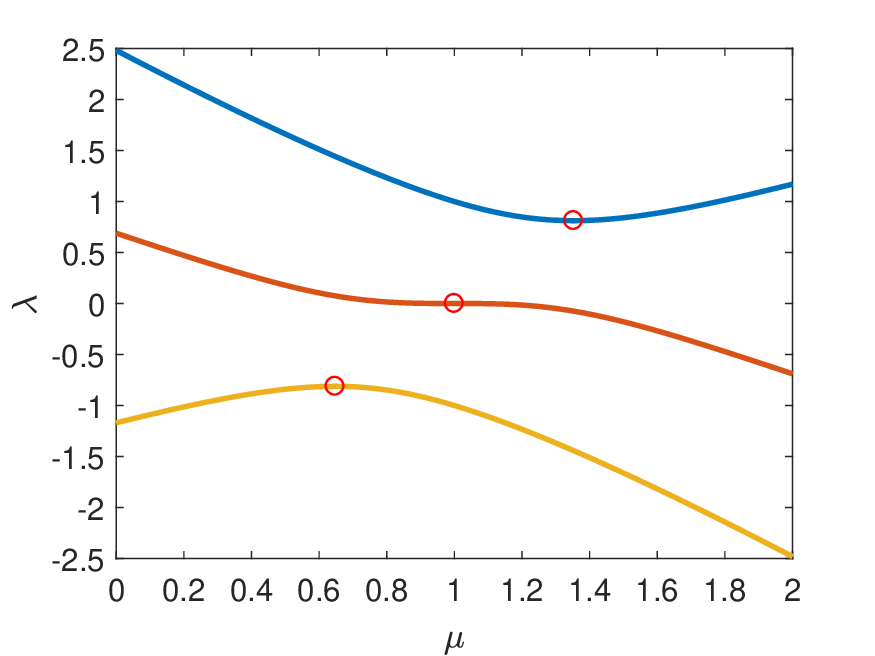}}
\qquad \subfloat[closed up plot of eigencurves $\lambda_2(\mu)$]
{\includegraphics[width=0.45\textwidth]{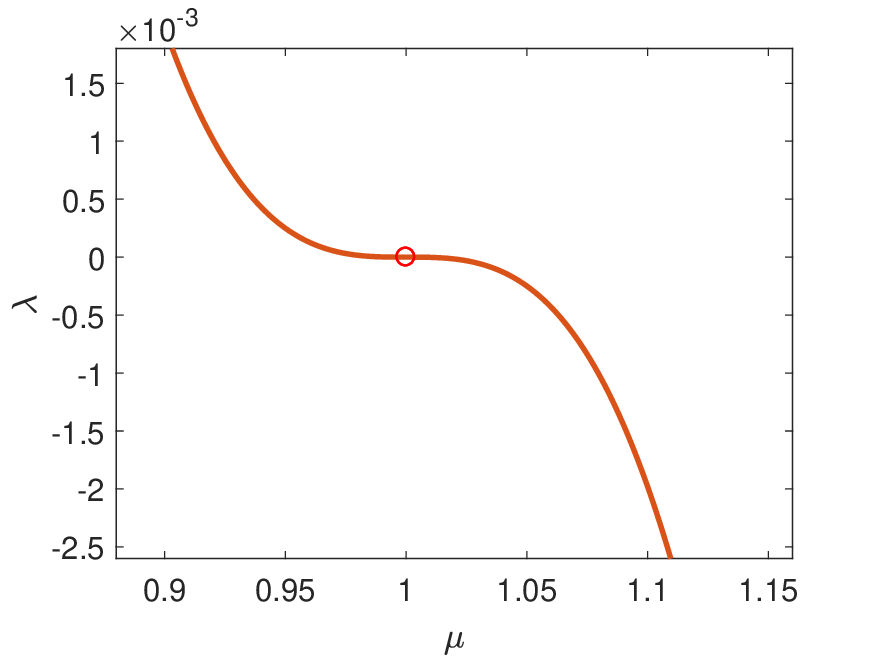}} \\
\subfloat[Three (sorted) eigencurves $\lambda_1(\mu) \geq \lambda_2(\mu) \geq \lambda_3(\mu)$]
{\includegraphics[width=0.45\textwidth]{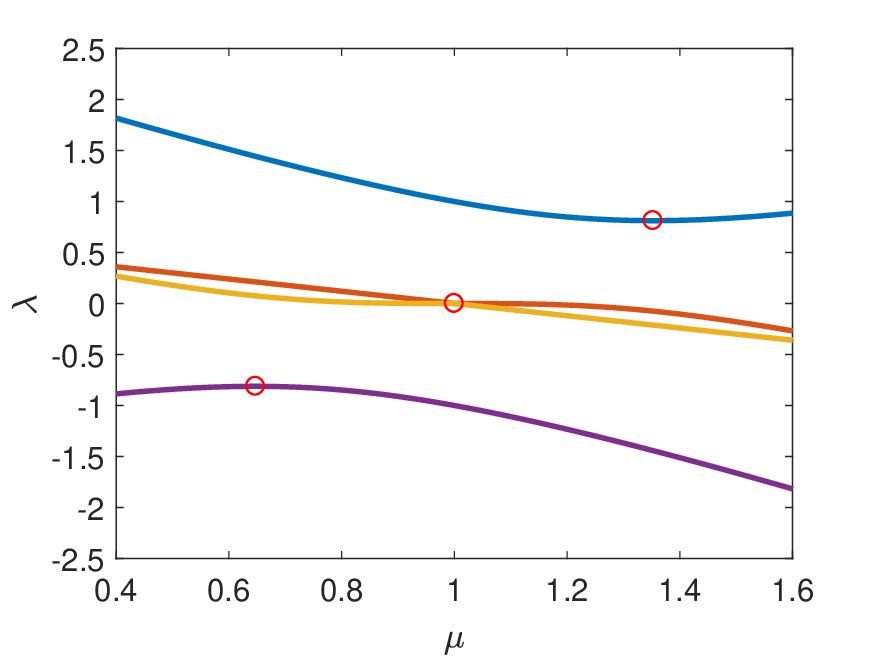}}
\qquad \subfloat[closed up plot of eigencurves $\lambda_2(\mu)$]
{\includegraphics[width=0.45\textwidth]{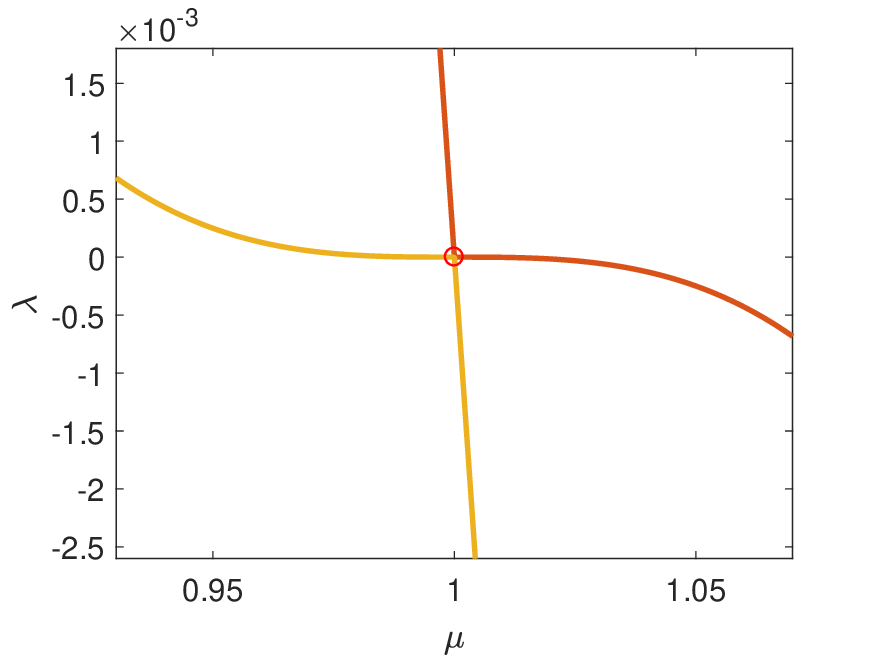}}
\caption{The 2D-eigenvalues can be neither minima nor maxima:
	(a) and (b) for the non-intersection case,
	(c) and (d) for the intersection case.
	}
\label{FigAllevls_singularcase1}
\end{figure}



The following theorem is on the existence of 2D-eigenvalues.

	\begin{theorem}\label{existence}
	The 2DEVP \eqref{2deig} has at least one 2D-eigenvalue.
\end{theorem}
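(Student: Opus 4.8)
The plan is to exhibit a 2D-eigenvalue by producing a local extremum of some sorted eigencurve and then invoking Theorem~\ref{thm_minima_is_2devl}. The key fact to exploit is that $C$ is indefinite, so it has at least one positive eigenvalue and at least one negative eigenvalue. First I would consider the bottom sorted eigencurve $\lambda_n(\mu)$, which by Theorem~\ref{thm_convex} is concave, and the top sorted eigencurve $\lambda_1(\mu)$, which is convex. The strategy is to show that at least one of these two curves attains an interior extremum on $\mathbb{R}$, which by concavity/convexity will be a global maximum (respectively minimum), hence in particular a local one, and then Theorem~\ref{thm_minima_is_2devl} finishes the job.

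To show $\lambda_n(\mu)$ attains its maximum, I would study its asymptotic behavior as $\mu\to\pm\infty$. Since $A-\mu C$ behaves like $-\mu C$ for large $|\mu|$, the smallest eigenvalue $\lambda_n(\mu)$ grows like $-\mu\cdot\lambda_{\min}(C)$ as $\mu\to+\infty$ when... — more carefully: for $\mu\to+\infty$, $\lambda_n(A-\mu C) = \min_{\|x\|=1} (x^HAx - \mu x^HCx)$, and choosing $x$ an eigenvector of $C$ for its largest positive eigenvalue $c_{\max}>0$ shows $\lambda_n(\mu) \le x^HAx - \mu c_{\max} \to -\infty$. Similarly, as $\mu\to-\infty$, picking $x$ an eigenvector for the most negative eigenvalue $c_{\min}<0$ of $C$ gives $\lambda_n(\mu)\le x^HAx - \mu c_{\min} = x^HAx + |\mu|\,|c_{\min}| \to -\infty$ — wait, that is the wrong sign; I would instead argue $\lambda_n(\mu)\to-\infty$ as $\mu\to-\infty$ by using the eigenvector for the positive eigenvalue again since $-\mu c_{\max}\to-\infty$ fails... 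The clean version: as $\mu\to+\infty$, take the $C$-eigenvector for $c_{\max}>0$; as $\mu\to-\infty$, take the $C$-eigenvector for $c_{\min}<0$, so that $-\mu c_{\min}\to-\infty$ in both regimes, forcing $\lambda_n(\mu)\to-\infty$ at both ends of $\mathbb{R}$. Since $\lambda_n$ is continuous and finite somewhere (e.g. at $\mu=0$), it attains a global maximum at some finite $\mu_*$; this $(\mu_*,\lambda_n(\mu_*))$ is a local maximum of a sorted eigencurve, so by Theorem~\ref{thm_minima_is_2devl} it is a 2D-eigenvalue.

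The main obstacle I anticipate is making the two-sided coercivity argument airtight using only that $C$ is indefinite: one must carefully pair the sign of the test eigenvector of $C$ with the sign of $\mu$ so that $\lambda_n(\mu)\to-\infty$ on \emph{both} sides. Using $c_{\max}>0$ handles $\mu\to+\infty$ and using $c_{\min}<0$ handles $\mu\to-\infty$ — precisely here indefiniteness of $C$ is essential, since if $C$ were positive (or negative) semidefinite one of the limits would not go to $-\infty$ and $\lambda_n$ could be monotone with no interior maximum. A secondary point to check is that $\lambda_n(\mu)$ is indeed continuous and real-valued for all real $\mu$ (immediate since $A-\mu C$ is Hermitian), and that Theorem~\ref{thm_minima_is_2devl} applies to a global maximum (it does, since a global maximum is a fortiori a local one). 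Alternatively, one can bypass the top/bottom dichotomy entirely and just use $\lambda_n$: concavity from Theorem~\ref{thm_convex} is not strictly needed for existence, only the coercivity at $\pm\infty$, but mentioning the concavity clarifies that the extremum is in fact global and unique (when $A,C$ share no common eigenvector).
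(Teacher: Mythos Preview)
Your proposal is correct and follows the same overall strategy as the paper: show that an extreme sorted eigencurve is coercive (tends to $\pm\infty$ at both ends of $\mathbb{R}$), conclude it attains an interior extremum, and invoke Theorem~\ref{thm_minima_is_2devl}. The paper works with $\lambda_1(\mu)\to+\infty$ while you work with the mirror case $\lambda_n(\mu)\to-\infty$, which is immaterial.

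The one genuine difference worth noting is in how you establish coercivity. The paper routes this through Proposition~\ref{cor_infinite evl}, which gives full asymptotic expansions $\widetilde{\lambda}_j(\mu)=-\mu\lambda_C^{(i)}+\tau_j^{(i)}+O(1/\mu)$ for the analyticalized eigencurves via perturbation theory applied to $C-\epsilon A$. You instead use the elementary Rayleigh-quotient test-vector bound $\lambda_n(\mu)\le x^HAx-\mu\,x^HCx$ with $x$ a suitably signed eigenvector of $C$. Your argument is shorter and self-contained, needing none of the analyticalized-eigencurve machinery; the paper's route, on the other hand, yields more precise asymptotic information (the slopes \emph{and} intercepts of all eigencurves), which it later reuses. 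For the bare existence statement, your approach is the more economical one.
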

\begin{proof} We prove by construction. 
	Let $\lambda_1(\mu)$ be the largest sorted eigencurve of $A-\mu C$.
	Then as $\mu\rightarrow-\infty$, 
	\[
	\lambda_{1}(\mu) =\max_{\|x\|=1} x^H(A-\mu C)x 
	\geq \lambda_{\min}(A)-\mu\max_{\|x\|=1}x^HCx 
	= \lambda_{\min}(A)-\mu\lambda_C^{(1)}\rightarrow +\infty, 
	\] 
	where $\lambda_C^{(1)}$ is the algebraically largest eigenvalue of $C$. 
	Note that $\lambda_C^{(1)} > 0$ since $C$ is indefinite. 
	On the other hand, as $\mu\rightarrow +\infty$, 
	\[ 
	\lambda_{1}(\mu) \geq \lambda_{\min}(A)-\mu\min_{\|x\|=1}x^HCx
	= \lambda_{\min}(A)-\mu\lambda_C^{(2)}\rightarrow +\infty,
	\] 
	where $\lambda_C^{(2)}$ is the algebraically smallest eigenvalue of $C$. 
	Note that $\lambda_C^{(2)} < 0$.
	Therefore, the minimum of $\lambda_1(\mu)$ is attainable 
	at some point $\mu_*$. By Theorem~\ref{thm_minima_is_2devl}, 
	$(\mu_*, \lambda_1(\mu_*))$ is a 2D-eigenvalue of $(A, C)$.
\end{proof}

	By Theorem~\ref{existence}, we have the following variational 
characterizations of extreme eigencurves 
$\lambda_1(\mu)$ and $\lambda_n(\mu)$.

\begin{theorem}\label{anavarchar}
	Let $\lambda_{1}(\mu)\geq\dots\geq\lambda_{n}(\mu)$ be $n$ sorted eigenvalues
	of $A-\mu C$. Then it holds that
	\begin{equation} \label{eq:var2a} 
		\min_{\mu\in \mathbb{R}} \lambda_1(\mu)= \max_{\substack{x\neq0\\
				x^HCx=0}} \rho_A(x)\quad \mbox{and} \quad \max_{\mu\in \mathbb{R}} \lambda_n(\mu)= \min_{\substack{x\neq0\\
				x^HCx=0}} \rho_A(x).
	\end{equation}
	where $\rho_A(x)$ is the Rayleigh quotient of $A$, 
	$\rho_A(x) = x^H A x /(x^H x)$.
\end{theorem}
\begin{proof}
	We only prove the first identity in \eqref{eq:var2a}. The proof 
	for the second identity is similar. We note that 
	the proof of Theorem~\ref{existence} indicates that the minimum 
	of $\lambda_1(\mu)$ is attainable at some point $\mu_*$ 
	and $(\mu_*,\lambda_*) = (\mu_*,\lambda_1(\mu_*))$ is a 2D-eigenvalue. 
	Let $x_*$ be the corresponding 2D-eigenvector of $(\mu_*, \lambda_*)$, then
	$$
	x_*^HCx_*=0
	\quad \mbox{and} \quad
	\lambda_*=\rho_A(x_*)\leq \max\limits_{\substack{x\neq0\\ x^HCx=0}} \rho_A(x).
	$$
	On the other hand,
	\[
	\lambda_*=\lambda_1(\mu_*)=\max\limits_{x^Hx=1}x^H(A-\mu_* C)x
	\geq\max\limits_{\substack{x^Hx=1\\
			x^HCx=0}}x^H(A-\mu_* C)x\\
	=\max\limits_{\substack{x^Hx=1\\ x^HCx=0}}x^HAx.
	\]
	This completes the proof.
\end{proof}

\medskip
As a corollary of Theorem~\ref{anavarchar}, the following result
provides lower and upper bounds of $\lambda$ 
of 2D-eigenvalues $(\mu,\lambda)$ on the $(\mu,\lambda)$-plane. 

	\begin{corollary}\label{boundfor2devl}
	Let $(\mu_*,\lambda_*)$ be a 2D-eigenvalue of $(A,C)$
	and $\lambda_{1}(\mu)\geq\dots\geq\lambda_{n}(\mu)$ be 
	$n$ sorted eigencurves of $A-\mu C$. Then 
	\begin{equation} \label{eq:lambd}
		\max_{\mu\in \mathbb{R}} \lambda_n(\mu) 
		\le \lambda_* \le
		\min_{\mu\in \mathbb{R}} \lambda_1(\mu),
	\end{equation} 
	where the first equality holds if $\lambda_n(\mu_*) = \lambda_*$,
	and the second equality holds if $\lambda_1(\mu_*)= \lambda_*$. 
\end{corollary}
	\begin{proof}
	Let $x_*$ be the 2D-eigenvector associated with $(\mu_*, \lambda_*)$. 
	Then the inequalities in \eqref{eq:lambd} hold 
	by Theorem~\ref{anavarchar} and the identity $\lambda_*=\rho_A(x_*)$. 
	The equalities hold due to the facts 
	\[
	\max_{\mu\in \mathbb{R}} \lambda_n(\mu) 
	\leq \lambda_*=\lambda_n(\mu_*)
	\leq\max_{\mu\in \mathbb{R}}\lambda_n(\mu)
	\quad \mbox{and} \quad 
	\min_{\mu\in \mathbb{R}} \lambda_1(\mu) 
	\leq \lambda_1(\mu_*) = \lambda_* 
	\leq \min_{\mu\in \mathbb{R}}\lambda_1(\mu).
	\]
\end{proof}

By Corollary~\ref{boundfor2devl},
we have the following definition. 
\begin{definition} \label{def:maxmin2dev} 
Let $\lambda_1(\mu^*)=\min\limits_{\mu\in \mathbb{R}} \lambda_1(\mu)$ and 
$\lambda_n(\mu_*)=\max\limits_{\mu\in \mathbb{R}} \lambda_n(\mu)$. 
Then $(\mu^*, \lambda_1(\mu^*))$ and $(\mu_*, \lambda_n(\mu_*))$ are 
called the maximum and minimum 2D-eigenvalues of $(A, C)$, respectively.
\end{definition}

\medskip
The following theorem provides an upper bound of $|\mu|$ of 
2D-eigenvalues $(\mu,\lambda)$ on the $(\mu,\lambda)$-plane when $C$ 
is nonsingular.

\begin{theorem}\label{boundonmu}
Assume $C$ is nonsingular. Let $\lambda_C^{(+)}$ and $\lambda_C^{(-)}$ be
the minimum positive and maximum negative eigenvalues of $C$, respectively.
If $(\mu_*,\lambda_*,x_*)$ is a 2D-eigentriplet,
then 
$$
|\mu_*|\leq {\|A\|}/{\sqrt{-\lambda_C^{(-)} \lambda_C^{(+)}}}.
$$
\end{theorem}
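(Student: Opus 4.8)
The plan is to use the eigenequation \eqref{eq:1a} together with the constraint \eqref{eq:1b} to split $Ax_*$ into an orthogonal sum, and then bound the length of $Cx_*$ from below using the spectrum of $C$. Since $C$ is nonsingular, every eigenvalue of $C$ is either $\ge\lambda_C^{(+)}>0$ or $\le\lambda_C^{(-)}<0$, and $Cx_*\neq0$ for $x_*\neq0$; these facts will make the argument go through cleanly.

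First I would rewrite \eqref{eq:1a} as $Ax_*=\lambda_*x_*+\mu_*Cx_*$. Equation \eqref{eq:1b} says exactly that $x_*^H(Cx_*)=0$, so $x_*$ and $Cx_*$ are orthogonal nonzero vectors. Applying the Pythagorean identity, and using $\|x_*\|=1$ and $\lambda_*\in\mathbb{R}$, gives $\|Ax_*\|^2=\lambda_*^2+\mu_*^2\|Cx_*\|^2$. Hence $\mu_*^2\|Cx_*\|^2=\|Ax_*\|^2-\lambda_*^2\le\|Ax_*\|^2\le\|A\|^2$, so that $|\mu_*|\le\|A\|/\|Cx_*\|$.

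It then remains to show $\|Cx_*\|^2\ge-\lambda_C^{(-)}\lambda_C^{(+)}$. Writing the spectral decomposition $C=\sum_i\gamma_iq_iq_i^H$ with orthonormal $q_i$ and expanding $x_*=\sum_ic_iq_i$, we have $\sum_i|c_i|^2=1$, $\sum_i\gamma_i|c_i|^2=x_*^HCx_*=0$, and $\|Cx_*\|^2=\sum_i\gamma_i^2|c_i|^2$. The key observation is that every eigenvalue $\gamma$ of $C$ satisfies $(\gamma-\lambda_C^{(+)})(\gamma-\lambda_C^{(-)})\ge0$ (check the two cases $\gamma\ge\lambda_C^{(+)}$ and $\gamma\le\lambda_C^{(-)}$), i.e. $\gamma^2\ge(\lambda_C^{(+)}+\lambda_C^{(-)})\gamma-\lambda_C^{(+)}\lambda_C^{(-)}$. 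Multiplying by $|c_i|^2$ and summing over $i$, the linear term drops out because $\sum_i\gamma_i|c_i|^2=0$, yielding $\|Cx_*\|^2\ge-\lambda_C^{(+)}\lambda_C^{(-)}$, which is strictly positive since $\lambda_C^{(+)}>0>\lambda_C^{(-)}$. Combining with the bound from the previous paragraph gives $|\mu_*|\le\|A\|/\sqrt{-\lambda_C^{(-)}\lambda_C^{(+)}}$.

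I do not expect a genuine obstacle here. The only mild subtlety is recognizing that \eqref{eq:1b} forces the decomposition $Ax_*=\lambda_*x_*+\mu_*Cx_*$ to be orthogonal, and choosing the quadratic inequality $(\gamma-\lambda_C^{(+)})(\gamma-\lambda_C^{(-)})\ge0$ precisely so that its linear term is annihilated by the constraint $x_*^HCx_*=0$; the nonsingularity of $C$ is used to guarantee $Cx_*\neq0$ and that there is a genuine spectral gap $(\lambda_C^{(-)},\lambda_C^{(+)})$ around $0$.
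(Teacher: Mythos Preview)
Your proof is correct and follows the same two-stage outline as the paper's: first show $|\mu_*|\le\|A\|/\|Cx_*\|$, then prove the lower bound $\|Cx_*\|^2\ge-\lambda_C^{(+)}\lambda_C^{(-)}$ under the constraints $\|x_*\|=1$ and $x_*^HCx_*=0$. The details, however, differ in both stages and your route is more elementary. For the first stage, the paper left-multiplies \eqref{eq:1a} by $x_*^HC$ to obtain $\mu_*=x_*^HCAx_*/\|Cx_*\|^2$ and then applies Cauchy--Schwarz, whereas you observe that \eqref{eq:1b} makes the decomposition $Ax_*=\lambda_*x_*+\mu_*Cx_*$ orthogonal and invoke Pythagoras; both yield the same bound $\|A\|/\|Cx_*\|$ (yours incidentally gives the slightly sharper $\sqrt{\|A\|^2-\lambda_*^2}/\|Cx_*\|$, though you do not use it). For the second stage, the paper formulates $\min\{\|Cx\|^2:\|x\|=1,\ x^HCx=0\}$ as a linear program in the variables $|c_i|^2$ and solves it via the KKT conditions; your one-line quadratic inequality $(\gamma-\lambda_C^{(+)})(\gamma-\lambda_C^{(-)})\ge0$, valid because every eigenvalue of $C$ lies outside $(\lambda_C^{(-)},\lambda_C^{(+)})$, is a considerably shorter way to reach the same conclusion, and the cancellation of the linear term via $x_*^HCx_*=0$ is exactly the right trick.
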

\begin{proof}
	By multiplying $x^{H}_*C$ on the left of \eqref{eq:1a}, we have 
	$$
	|\mu_*|=\frac{|x^{H}_*CAx_*|}{\|Cx_*\|^2} 
	\leq \frac{\|A\|\|x_*\|}{\|Cx_*\|} = \frac{\|A\|}{\|Cx_*\|}.
	$$
	Then an upper bound of $\frac{\|A\|}{\|Cx_*\|}$ is from a lower bound
	of $\|Cx_*\|$, which leads to compute the quantity 
	\begin{equation} \label{eq:maxC}
		\min\limits_{\substack{x^Hx=1\\ x^HCx=0}}\|Cx\|.
	\end{equation}
	By substituting $C^2$ for $A$ in the second equation of \eqref{eq:var2a}, 
	we have
	\begin{equation}\label{eq:newmuProb}
		\min\limits_{\substack{x^Hx=1\\ x^HCx=0}}\|Cx\|^2
		=\min\limits_{\substack{x^HCx=0\\ x\neq0}}\frac{x^HC^2x}{x^Hx}
		=\max\limits_{\mu\in\mathbb{R}}\lambda_n(C^2-\mu C)
		=\max\limits_{\mu\in\mathbb{R}}\min\{c_i^2-\mu c_i|\ i=1,\cdots,n\},
	\end{equation}
	where $c_1, c_2, \ldots, c_n$ are eigenvalues of $C$. 
	Let $\lambda_C^{(-)} = c_j$ and $\lambda_C^{(+)} = c_k$ for some $j$ and $k$. 
	Then at the intersection 
	$\widetilde{\mu}_*=c_j+c_k$ of lines $c_j^2-\mu c_j$ and $c_k^2-\mu c_k$,
	we have
	\[
	\begin{aligned}
		\max\limits_{\mu\in\mathbb{R}}\min\{c_i^2-\mu c_i|\ i=1,\cdots,n\}&\leq  \max\limits_{\mu\in\mathbb{R}}\min\{c_i^2-\mu c_i|\ i=j,k\}\\
		&=\min\{c_i^2-\widetilde{\mu}_*c_i|\ i=j,k\}
		=-\lambda_C^{(-)}\lambda_C^{(+)}.
	\end{aligned}
	\]
	On the other hand, we can prove 
	$-\lambda_C^{(-)}\lambda_C^{(+)}\leq c_i^2-\tilde{\mu}_*c_i$ 
	for $i=1,\cdots,n$. Without loss of generality, 
	we only consider the case $c_i>0$. Then
	\begin{equation}
		-\lambda_C^{(-)}\lambda_C^{(+)}-(c_i^2-\widetilde{\mu}_*c_i)
		=c_k^2-\widetilde{\mu}_*c_k-c_i^2+\tilde{\mu}_*c_i
		=(c_k-c_i)(c_k+c_i-\widetilde{\mu}_*)
		=(c_k-c_i)(c_i-c_j) \leq 0,
	\end{equation}
	where the first equation is due to the fact 
	$-\lambda_C^{(-)}\lambda_C^{(+)}=c_j^2-\widetilde{\mu}_*c_j
	=c_k^2-\widetilde{\mu}_*c_k$
	and the last inequality results from the fact that 
	either $c_j\leq c_k \leq c_i$ or $c_i\leq c_j\leq c_k$ holds. 
	Hence we have
	\begin{equation*}
		\begin{aligned}
			\max\limits_{\mu\in\mathbb{R}}\min\{c_i^2-\mu c_i|\ i=1,\cdots,n\}
			&\geq  \min\{c_i^2-\tilde{\mu}_* c_i|\ i=1,\cdots,n\}
			=-\lambda_C^{(-)}\lambda_C^{(+)}.
		\end{aligned}
	\end{equation*}
	This implies 
	\[
	\min\limits_{\substack{x^Hx=1\\ x^HCx=0}}\|Cx\|^2
	=\max\limits_{\mu\in\mathbb{R}}\min\{c_i^2-\mu c_i|\ i=1,\cdots,n\}
	=-\lambda_C^{(-)}\lambda_C^{(+)}. 
	\] 
	This completes the proof. 
\end{proof}


\section{Number of 2D-eigenvalues} \label{sec_wellposed}
In this section, we derive a sufficient and necessary condition
for the 2DEVP \eqref{2deig} to have a finite number of 2D-eigenvalues.
By speaking the number of 2D-eigenvalues, we mean the number 
of distinct 2D-eigenvalues.


\begin{theorem} \label{thm:finitemany}
The 2DEVP \eqref{2deig} has a finite number of 2D-eigenvalues
if and only if the matrix pair $(A-\sigma I, C)$ is regular
for any $\sigma \in \mathbb{R}$.\footnote{
A matrix pair $(A, B)$ is called regular if 
$\mbox{det}(A-\theta B) \not\equiv 0$ for any $\theta \in \mathbb{C}$.
Otherwise, it is called singular.}  
\end{theorem}

Geometrically, the sufficient and necessary condition 
in Theorem~\ref{thm:finitemany} is equivalent to 
the fact that there is no any eigencurve $\lambda(\mu)$ of $A - \mu C$ 
which is a constant (a horizontal line) with respective to $\mu$.
The proof of Theorem~\ref{thm:finitemany} is built on the following
four propositions.  The first proposition states a case where 
there are infinitely many 2D-eigenvalues.

\begin{proposition}\label{irregularmeansinfinite}
If there exists a shift $\sigma_0 \in \mathbb{R}$, such that
the matrix pair $(A-\sigma_0I, C)$ is singular,
then the 2DEVP \eqref{2deig} has infinitely many 2D-eigenvalues.  \end{proposition}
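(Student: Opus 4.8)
The plan is to exploit the hypothesis that $(A-\sigma_0 I, C)$ is singular, i.e., $\det(A - \sigma_0 I - \mu C) \equiv 0$ as a polynomial in $\mu$. First I would unpack what singularity of the pencil means: for \emph{every} $\mu \in \mathbb{R}$, the matrix $A - \mu C - \sigma_0 I$ is singular, so $\sigma_0$ is an eigenvalue of $A - \mu C$ for all $\mu$. In the language of the paper's analyticalized eigencurves (Theorem~\ref{analytic_prop}), this forces at least one analyticalized eigencurve $\widetilde\lambda_j(\mu)$ to satisfy $\widetilde\lambda_j(\mu) = \sigma_0$ identically on $\mathbb{R}$ — indeed, since the finitely many analyticalized eigencurves are analytic and their union of graphs contains the horizontal line $\lambda = \sigma_0$, by the uniqueness property (item 3 of Theorem~\ref{analytic_properties}) some $\widetilde\lambda_j$ must coincide with the constant $\sigma_0$ on a set with an accumulation point, hence everywhere.

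Next I would extract 2D-eigenvectors. Let $N(\mu) = \ker(A - \mu C - \sigma_0 I)$, which is nontrivial for every $\mu$. The idea is that for each $\mu$ we want a unit vector $x \in N(\mu)$ with $x^H C x = 0$; then $(\mu, \sigma_0, x)$ solves \eqref{2deig}, and since $\sigma_0$ is fixed while $\mu$ ranges over a continuum, distinct values of $\mu$ give distinct 2D-eigenvalues $(\mu,\sigma_0)$, yielding infinitely many. To produce such an $x$, take two distinct real shifts $\mu_1 \neq \mu_2$ and pick unit vectors $v_i \in N(\mu_i)$. Subtracting the defining equations gives $(A - \mu_1 C - \sigma_0 I)v_1 = 0$ and $(A - \mu_2 C - \sigma_0 I)v_2 = 0$; more usefully, if $v \in N(\mu_1) \cap N(\mu_2)$ then $(\mu_1 - \mu_2) C v = 0$, so such a common vector is a null vector of $C$ and trivially satisfies $x^H C x = 0$. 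That handles the (likely) case where the kernels intersect. In general I would argue as follows: fix any $\mu_0$ in the continuum and consider the restriction of $C$ to $N(\mu_0)$; if this restriction is indefinite or singular we immediately get $x \in N(\mu_0)$ with $x^H C x = 0$. If instead $X^H C X$ is definite (say positive definite) for an orthonormal basis $X$ of $N(\mu_0)$, then I would use the derivative formula of Theorem~\ref{analytic_prop3}: the one-sided derivatives of the $k$ analyticalized eigencurves passing through $(\mu_0,\sigma_0)$ correspond to eigenvalues of $-X^H C X$, which would all be strictly negative — contradicting the fact that one of those eigencurves is the \emph{constant} $\sigma_0$, whose derivative is $0$. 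Hence $X^H C X$ cannot be definite, and we always find the required null vector of the restricted form, for every $\mu_0$.

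So the structure of the proof is: (1) pencil singularity $\Rightarrow$ some analyticalized eigencurve is $\equiv \sigma_0$; (2) for each $\mu \in \mathbb{R}$, the eigenspace $N(\mu)$ of $\sigma_0$ is nontrivial and, by the derivative characterization in Theorem~\ref{analytic_prop3} applied to the constant eigencurve, the compression $X^H C X$ of $C$ to $N(\mu)$ cannot be definite, so there is a unit $x \in N(\mu)$ with $x^H C x = 0$; (3) each such triple $(\mu, \sigma_0, x)$ is a 2D-eigentriple, and since the first coordinate $\mu$ takes infinitely many (indeed all real) values while $\lambda = \sigma_0$ is pinned, there are infinitely many distinct 2D-eigenvalues.

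I expect the main obstacle to be step (2) — rigorously ruling out the case where $X^H C X$ is definite. The clean resolution is the observation that a constant eigencurve has zero derivative, so by Theorem~\ref{analytic_prop3} the multiset of eigenvalues of $-X^H C X$ must contain $0$; therefore $X^H C X$ is always singular (not merely non-definite), and $0 \in \operatorname{range}$ of the quadratic form $x \mapsto x^H (X^H C X) x$, giving the null vector directly. One must be slightly careful that the "analyticalized eigencurve equal to $\sigma_0$" is genuinely among the $k$ eigencurves passing through $(\mu_0, \sigma_0)$ with the right multiplicity bookkeeping, but this is immediate since $\widetilde\lambda_j \equiv \sigma_0$ means $\widetilde\lambda_j(\mu_0) = \sigma_0$ for that $j$ and every $\mu_0$. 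A secondary routine point is confirming that the $(\mu,\sigma_0)$ for different $\mu$ really are distinct \emph{2D-eigenvalues} in the sense of the paper (distinct points in the $(\mu,\lambda)$-plane), which is obvious since their first coordinates differ.
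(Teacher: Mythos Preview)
Your proposal is correct and follows essentially the same strategy as the paper: show that pencil singularity forces some analyticalized eigencurve $\widetilde\lambda_j \equiv \sigma_0$, then use its vanishing derivative to produce, for every $\mu$, a unit eigenvector $x$ of $A-\mu C$ for $\sigma_0$ with $x^H C x = 0$. The paper's execution of the second step is slightly more direct than yours: rather than invoking Theorem~\ref{analytic_prop3} to argue that $X^H C X$ has a zero eigenvalue and then extracting a null vector, it simply takes the analytic eigenvector $\widetilde x_j(\mu)$ already supplied by Theorem~\ref{analytic_prop} and differentiates $(A-\mu C-\widetilde\lambda_j(\mu) I)\widetilde x_j(\mu)=0$ to obtain $0 = \widetilde\lambda_j'(\mu) = -\widetilde x_j(\mu)^H C\, \widetilde x_j(\mu)$ immediately.
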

\begin{proof}
Let $\widetilde{\lambda}_1(\mu)$, $\cdots$, $\widetilde{\lambda}_n(\mu)$
be the analyticalized eigencurves of $A-\mu C$.
Let $\mu_k=1-\frac{1}{k}$ for $k = 1, 2, \ldots$. 
For each $k$, since
the matrix pair $(A-\sigma_0I, C)$ is singular,
$\det(A-\sigma_0I-\mu_kC)=0$.
Hence $\sigma_0$ is an eigenvalue of $A-\mu_kC$.
Thus there exists at least one $j_k$ such that $\widetilde{\lambda}_{j_k}(\mu_k)=\sigma_0$.
By selecting subsequences, we can assume $j_k$ are the same for each $k$. Without loss of generality, assume $j_k=1$.
Then by Theorem~\ref{analytic_properties}\eqref{item:uniquess}, we have
	$\widetilde{\lambda}_1(\mu)\equiv\sigma_0$.
This implies that $\widetilde{\lambda}_1'(\mu)=0$. By taking derivation on the equation
$(A-\mu C-\widetilde{\lambda}_1(\mu))\widetilde{x}_1(\mu)=0$
and multiplying $\widetilde{x}^H_1(\mu)$ on the left, we obtain
	$0=\widetilde{\lambda}_1'(\mu)=-\widetilde{x}^H_1(\mu)C\widetilde{x}_1(\mu)$.
Thus for any $\mu$, $(\mu,\sigma_0)$ corresponds to a 2D-eigenvalue.
\end{proof}

Here is an example to illustrate Proposition~\ref{irregularmeansinfinite}.

\begin{example}\label{example2}
{\rm
Consider the matrix pair
\[
A=\begin{bmatrix}1&2&0\\2&1&0\\0&0&4\end{bmatrix}
\quad \mbox{and} \quad
C=\begin{bmatrix}1&1&0\\1&1&0\\0&0&-1\end{bmatrix}.
\]
By the shift $\sigma_0 = -1$,
the matrix pair $(A+I, C)$ is singular since
the vector $x=\begin{bmatrix} 1 & -1 & 0 \end{bmatrix}^T$
is in the common nullspace of $A+I$ and $C$.
It can be verified that
$(\mu, -1)$ is a 2D-eigenvalue of $(A,C)$ for any $\mu\in\mathbb{R}$.
Therefore the 2DEVP of $(A,C)$ has infinitely many 2D-eigenvalues. 
$\lambda(\mu) \equiv -1$ is a horizontal eigencurve.
\hfill $\Box$
}\end{example}

We next prove a proposition which says that on a finite interval $[a,b]$,
the number of 2D-eigenvalues $(\mu, \lambda)$ with $\mu\in[a,b]$
is finite if there exists no such a shift as described
in Proposition~\ref{irregularmeansinfinite}.

\begin{proposition}\label{boundedimplyfinite}
If the matrix pair $(A-\sigma I, C)$ is regular
for any $\sigma \in \mathbb{R}$,
then the number of 2D-eigenvalues $(\mu, \lambda)$ with
$\mu\in[a,b]$ is finite, where $a, b \in \mathbb{R}$ are finite.
\end{proposition}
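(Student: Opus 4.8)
\emph{Proof idea.} The plan is to reduce the counting of 2D-eigenvalues with $\mu\in[a,b]$ to counting zeros of finitely many analytic functions on the compact interval $[a,b]$, the point being that the regularity hypothesis on $(A-\sigma I,C)$ is precisely what prevents these functions from vanishing identically. The first step is to record a characterization of 2D-eigenvalues through the analyticalized eigencurves $\widetilde{\lambda}_1(\mu),\dots,\widetilde{\lambda}_n(\mu)$ of $A-\mu C$ from Theorem~\ref{analytic_prop}: if $\lambda_*$ is an eigenvalue of $A-\mu_*C$ of multiplicity $k$, so that $\widetilde{\lambda}_j(\mu_*)=\lambda_*$ for exactly $k$ indices $j$, and $X_k$ is an orthonormal basis of the $\lambda_*$-eigenspace with $C_k=X_k^HCX_k$, then exactly as in Remark~\ref{remakr52} a candidate eigenvector must be of the form $X_kz$, so $(\mu_*,\lambda_*)$ is a 2D-eigenvalue if and only if $z^HC_kz=0$ has a nonzero solution, i.e.\ $C_k$ is not definite, i.e.\ $-C_k$ has both a nonnegative and a nonpositive eigenvalue. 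By Theorem~\ref{analytic_prop3}, the eigenvalues of $-C_k$ are exactly the derivatives $\widetilde{\lambda}_j'(\mu_*)$ of the $k$ branches through $(\mu_*,\lambda_*)$. Hence $(\mu_*,\lambda_*)$ is a 2D-eigenvalue if and only if there are indices $p\le q$ with $\widetilde{\lambda}_p(\mu_*)=\widetilde{\lambda}_q(\mu_*)=\lambda_*$ and $\widetilde{\lambda}_p'(\mu_*)\,\widetilde{\lambda}_q'(\mu_*)\le0$ (when $p=q$ this means $\widetilde{\lambda}_p'(\mu_*)=0$).

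Given this, for each of the finitely many pairs $(p,q)$ with $1\le p\le q\le n$ I would bound the number of $\mu\in[a,b]$ for which $\widetilde{\lambda}_p(\mu)=\widetilde{\lambda}_q(\mu)$ and $\widetilde{\lambda}_p'(\mu)\widetilde{\lambda}_q'(\mu)\le0$; summing over pairs then bounds the number of 2D-eigenvalues with $\mu\in[a,b]$. The essential input is that no analyticalized eigencurve is constant: if $\widetilde{\lambda}_p\equiv\sigma$, then $\det\big((A-\sigma I)-\mu C\big)\equiv0$ as a polynomial in $\mu$, so the pencil $(A-\sigma I,C)$ would be singular, contradicting the hypothesis. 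Consequently each $\widetilde{\lambda}_p'$ is a nonzero analytic function on $\mathbb{R}$ (Theorem~\ref{analytic_properties}), hence has only finitely many zeros in $[a,b]$ by the identity theorem; this settles the diagonal pairs $p=q$. For $p<q$: if $\widetilde{\lambda}_p-\widetilde{\lambda}_q\not\equiv0$ it is a nonzero analytic function, so $\widetilde{\lambda}_p(\mu)=\widetilde{\lambda}_q(\mu)$ holds at only finitely many $\mu\in[a,b]$; and if $\widetilde{\lambda}_p\equiv\widetilde{\lambda}_q$, the sign condition reduces to $(\widetilde{\lambda}_p'(\mu))^2\le0$, i.e.\ $\widetilde{\lambda}_p'(\mu)=0$, again finitely many $\mu\in[a,b]$. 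In every case the admissible set of $\mu$ is finite, and a finite union of finite sets is finite.

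The step I expect to be the main obstacle is making the characterization in the first paragraph fully rigorous in the multiple-eigenvalue case: it leans on Theorem~\ref{analytic_prop3} identifying the derivatives of \emph{all} $k$ branches through a point with the spectrum of $-C_k$, together with the elementary but crucial equivalence ``$C_k$ admits a nonzero isotropic vector $\iff$ $C_k$ is not definite'' (which needs a small connectedness argument on the unit sphere in the genuinely indefinite case, and the null vector in the singular semidefinite case). As a fallback I would instead argue by contradiction: if there were infinitely many distinct 2D-eigenvalues $(\mu_m,\lambda_m)$ with $\mu_m\in[a,b]$, pass to a subsequence with $\mu_m\to\mu_*$ and, by Theorem~\ref{boundfor2devl}, $\lambda_m\to\lambda_*$; use the pigeonhole principle to fix one pair $(p,q)$ realising the slope condition at every $\mu_m$; then the identity theorem forces either $\widetilde{\lambda}_p\equiv\widetilde{\lambda}_q$ with $\widetilde{\lambda}_p$ constant, or $\widetilde{\lambda}_p'\equiv0$ and hence $\widetilde{\lambda}_p$ constant — in either case a constant eigencurve, contradicting regularity of $(A-\sigma I,C)$.
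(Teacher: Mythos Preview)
Your proposal is correct. Your fallback argument by contradiction is exactly the paper's proof: assume infinitely many 2D-eigenvalues in $[a,b]\times[\mathrm{lb},\mathrm{ub}]$, extract a convergent subsequence, pigeonhole onto a fixed pair of analyticalized eigencurves, and use the identity theorem (Theorem~\ref{analytic_properties}(3)) to force one of them to be constant, contradicting regularity of $(A-\sigma I,C)$.

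Your primary approach is a mild but pleasant reorganization of the same ideas. Rather than arguing by contradiction, you first isolate the characterization implicit in Remarks~\ref{remakr51}--\ref{remakr52} and Theorem~\ref{analytic_prop3}---namely that $(\mu_*,\lambda_*)$ is a 2D-eigenvalue iff some pair $p\le q$ of analyticalized branches satisfies $\widetilde{\lambda}_p(\mu_*)=\widetilde{\lambda}_q(\mu_*)$ and $\widetilde{\lambda}_p'(\mu_*)\widetilde{\lambda}_q'(\mu_*)\le 0$---and then bound, for each of the $\binom{n+1}{2}$ pairs, the admissible $\mu\in[a,b]$ as the zero set of a nonvanishing analytic function ($\widetilde{\lambda}_p-\widetilde{\lambda}_q$ or $\widetilde{\lambda}_p'$). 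The crucial input, that no $\widetilde{\lambda}_p$ is constant, is exactly the regularity hypothesis, just as in the paper. What your direct route buys is that it makes the finiteness visible without passing to subsequences, and it makes explicit the equivalence ``$C_k$ has a nonzero isotropic vector $\iff$ $C_k$ is not definite'' that the paper uses only implicitly. What the paper's contradiction route buys is that it sidesteps the need to state and prove that equivalence up front; it only needs the (weaker) observation that at each 2D-eigenvalue some pair of branch derivatives has nonpositive product, which follows from Theorem~\ref{analytic_prop3} and the existence of the isotropic eigenvector.
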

\begin{proof}
Let $\widetilde{\lambda}_k(\mu), k=1,\cdots,n$ be the
analyticalized eigencurves of $A-\mu C$.
According to Corollary~\ref{boundfor2devl}, $\lambda$ of the 2D-eigenvalues $(\mu, \lambda)$
are bounded and we denote the bound by $[\text{lb}, \text{ub}]$. Assume there exist
infinitely many 2D-eigenvalues $(\mu_k, \lambda_k)$ in the close domain
$[a,b]\times[\text{lb}, \text{ub}]$ with $(\mu_k, \lambda_k)\neq(\mu_j, \lambda_j)$ for $j\neq k$, then they must have a convergent subseries. Without loss of generality, we still denote them by $(\mu_k, \lambda_k)$ and assume $(\mu_k, \lambda_k)\rightarrow (\mu^*, \lambda^*)$.

If there are infinitely many 2D-eigenvalues that correspond to intersections, then there exist two analyticalized eigencurves, which for convenience we assume to be $\widetilde{\lambda}_1$, $\widetilde{\lambda}_2$, and a subseries of $\mu_k$, which we still denote as $\mu_k$, such that $\widetilde{\lambda}_1(\mu_k)=\widetilde{\lambda}_2(\mu_k)$ and $\widetilde{\lambda}_1'(\mu_k)\widetilde{\lambda}_2'(\mu_k)\leq0$. Since $\{\mu_k\}_{k=1}^{\infty}$ converges, using Theorem~\ref{analytic_properties}\eqref{item:uniquess} we know $\widetilde{\lambda}_1(\mu)=\widetilde{\lambda}_2(\mu)$, which further implies $\widetilde{\lambda}_1'(\mu)=\widetilde{\lambda}_2'(\mu)$. Thus $\widetilde{\lambda}_1'(\mu_k)=\widetilde{\lambda}_2'(\mu_k)=0$. Using Theorem~\ref{analytic_properties}\eqref{item:uniquess} again, we obtain $\widetilde{\lambda}_1(\mu)=\widetilde{\lambda}_2(\mu)=\lambda^*$. So $\rank \left(A-\lambda^*I-\mu C\right)\leq n-2$ for any $\mu\in\mathbb{R}$. Note that the singularity of the matrix pair $(A-\lambda^*I, C)$ can
be equivalently described as
$\det(A-\lambda^*I-\mu C)=0,\ \forall \mu\in\mathbb{R}$. Thus it contradicts the assumption that $(A-\lambda^*I, C)$ is regular.

Thus we can assume all the $(\mu_k, \lambda_k)$ correspond to non-intersections. There exist one $\widetilde{\lambda}_i$, which for convenience we assume to be $\widetilde{\lambda}_1$, such that $\widetilde{\lambda}_1(\mu_k)=\lambda_k$. Then $\widetilde{\lambda}_1'(\mu_k)=0$ since it is an eigentriplet corresponding to non-intersection. Using Theorem~\ref{analytic_properties}\eqref{item:uniquess} we have $\widetilde{\lambda}_1'(\mu)=0$ and $\widetilde{\lambda}_1(\mu)=\lambda^*$. Thus $\rank \left(A-\lambda^*I-\mu C\right)= n-1$ for any $\mu$ and contradicts the assumption.
\end{proof}

The next proposition shows that under
the conditions of Proposition~\ref{boundedimplyfinite},
all analyticalized eigencurves 
$\widetilde{\lambda}_{\ell}(\mu)$ of $A - \mu C$
are strictly monotonous for sufficiently large $\mu$.

\begin{proposition}\label{monotonous}
If the matrix pair $(A-\sigma I, C)$ is regular
for any shift $\sigma \in \mathbb{R}$,
then for any analyticalized eigencurve
$\widetilde{\lambda}_{\ell}(\mu)$ of $A - \mu C$,
there exists a positive constant $M$ such that
$\widetilde{\lambda}'_{\ell}(\mu)\neq0$ for $|\mu|>M$.
\end{proposition}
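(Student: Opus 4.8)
The plan is to argue by contradiction using the analyticity of $\widetilde{\lambda}(\mu)$ together with the asymptotic information furnished by Proposition~\ref{cor_infinite evl}. Suppose to the contrary that there is no such constant $M$; then there is a sequence $\{\mu_k\}$ with $|\mu_k| \to \infty$ and $\widetilde{\lambda}'(\mu_k) = 0$ for all $k$. Passing to a subsequence, we may assume all the $\mu_k$ go to $+\infty$ (the case $\mu_k \to -\infty$ is symmetric). Since $\widetilde{\lambda}'(\mu)$ is itself analytic on $\mathbb{R}$ by part~2 of Theorem~\ref{analytic_properties}, and the $\mu_k$ are distinct, the issue is that they do not accumulate at a finite point, so the uniqueness theorem (part~3 of Theorem~\ref{analytic_properties}) does not apply directly.

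The key step is therefore to transfer the problem to a bounded interval via the change of variable $\epsilon = 1/\mu$, exactly as in the proof of Proposition~\ref{cor_infinite evl}. Under $\epsilon = 1/\mu$, the eigencurve $\widetilde{\lambda}(\mu)$ corresponds to a function of the form $-\mu \lambda_C^{(i)} + \tau_j^{(i)} + O(1/\mu)$ for large $\mu$; writing $g(\epsilon) = \epsilon\,\widetilde{\lambda}(1/\epsilon)$, Proposition~\ref{cor_infinite evl} shows that $g$ extends to an analytic function in a neighborhood of $\epsilon = 0$ with $g(0) = -\lambda_C^{(i)}$ and $g(\epsilon) = -\lambda_C^{(i)} + \tau_j^{(i)}\epsilon + O(\epsilon^2)$ (this is just the analyticalized eigencurve of $C - \epsilon A$ near $\epsilon = 0$, which is analytic by Corollary~\ref{cor_local_analytic_property}/Theorem~\ref{analytic_prop}). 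Now $\widetilde{\lambda}(\mu) = g(\epsilon)/\epsilon$, so $\widetilde{\lambda}'(\mu) = -\epsilon^2 \frac{d}{d\epsilon}\big(g(\epsilon)/\epsilon\big) = g(\epsilon) - \epsilon g'(\epsilon)$. Set $h(\epsilon) = g(\epsilon) - \epsilon g'(\epsilon)$; this is analytic near $\epsilon = 0$, and the hypothetical zeros $\mu_k$ of $\widetilde{\lambda}'$ become zeros $\epsilon_k = 1/\mu_k \to 0$ of $h$. Since $h$ is analytic at $0$ and vanishes on a sequence converging to $0$, the uniqueness theorem forces $h \equiv 0$ near $0$, i.e.\ $g(\epsilon) = \epsilon g'(\epsilon)$, whose only analytic solutions are $g(\epsilon) = c\epsilon$ for a constant $c$; but then $g(0) = 0$, contradicting $g(0) = -\lambda_C^{(i)}$ unless $\lambda_C^{(i)} = 0$.

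It remains to rule out the degenerate possibility $\lambda_C^{(i)} = 0$, i.e.\ that the asymptotic slope of $\widetilde{\lambda}$ is zero because $C$ is singular; this is where the regularity hypothesis on $(A - \sigma I, C)$ enters. If $g \equiv c\epsilon$ with $g(0) = 0$, then $\widetilde{\lambda}(\mu) = g(1/\mu)\cdot\mu = c$ is constant on $\{|\mu| > M\}$, hence constant on $\mathbb{R}$ by analyticity, so $c$ is an eigenvalue of $A - \mu C$ for every $\mu$, i.e.\ $\det(A - cI - \mu C) = 0$ for all $\mu \in \mathbb{R}$; that is precisely the statement that the pair $(A - cI, C)$ is singular, contradicting the hypothesis. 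This closes the argument.

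The main obstacle is the one already flagged: the zeros of $\widetilde{\lambda}'$ run off to infinity rather than accumulating, so one cannot invoke the identity theorem for analytic functions on $\mathbb{R}$ directly; the whole point of the proof is the compactification $\epsilon = 1/\mu$, which converts "behavior at $\infty$" into "behavior at an ordinary analytic point," and then verifying that $\epsilon\widetilde{\lambda}(1/\epsilon)$ really is analytic at $\epsilon = 0$ — which is exactly what Proposition~\ref{cor_infinite evl} (via Corollary~\ref{cor_local_analytic_property}) provides. A minor care point is handling the two ends $\mu \to \pm\infty$ separately, but by symmetry only one needs to be written out.
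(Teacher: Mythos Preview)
Your proof is correct and takes essentially the same approach as the paper: both transfer the problem to $\epsilon=1/\mu$, use that $g(\epsilon)=\epsilon\,\widetilde\lambda(1/\epsilon)$ coincides with an analyticalized eigencurve of $-C+\epsilon A$ (hence is analytic at $\epsilon=0$), compute $\widetilde\lambda'(\mu)=g(\epsilon)-\epsilon g'(\epsilon)$, and then argue that this function cannot vanish identically without forcing $\widetilde\lambda$ to be constant, contradicting regularity of $(A-\sigma I,C)$. The paper phrases the last step as a direct Laurent-series argument (if the series $\alpha_0+\sum_{k\ge2}(1-k)\alpha_k\mu^{-k}$ is not identically zero, its leading term dominates for large $|\mu|$), whereas you phrase it as contradiction via the identity theorem, but these are the same computation; your detour through the case split on $\lambda_C^{(i)}=0$ is unnecessary, since $g(\epsilon)=c\epsilon$ already gives $\widetilde\lambda\equiv c$ directly.
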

\begin{proof}
Let $\widetilde{\lambda}_{\ell}(\mu)=\widetilde{\lambda}_{\ell}(A-\mu C)$ 
be an analyticalized eigencurve. For $\epsilon\neq0$, define 
\begin{equation}\label{eq:defanalyex}
g(\epsilon)\equiv\epsilon\widetilde{\lambda}_{\ell}(\frac{1}{\epsilon})=\epsilon\widetilde{\lambda}_{\ell}(A-\frac{1}{\epsilon} C).
\end{equation}
We note that 
\begin{equation}
\det\left(-C+\epsilon A-g(\epsilon)I\right) = \epsilon^n\det\left(A-\frac{1}{\epsilon}C-\widetilde{\lambda}_{\ell}(A-\frac{1}{\epsilon}C)I\right) = 0.
\end{equation}
Thus $g(\epsilon)$ is an eigenvalue of $-C+\epsilon A$. 
According to \eqref{eq:defanalyex} and Theorem~\ref{analytic_properties}\eqref{item:product}\eqref{item:component}, $g(\cdot)$ is real analytic 
on $(-\infty,0)$ and $(0,+\infty)$. Therefore it must identically equal to
two analyticalized eigencurves of the parameter eigenvalue problem $(-C+\epsilon A)x=\lambda x$, say $\widehat{\lambda}_1(\epsilon)$ and $\widehat{\lambda}_2(\epsilon)$, for $\epsilon<0$ and $\epsilon>0$, respectively.  
 
For $\mu=\frac{1}{\epsilon}\neq0$, we have
\begin{equation}\label{eq:dtildelambda}
\frac{\rm d}{{\rm d}\mu}\widetilde{\lambda}_{\ell}(\mu)
=\frac{\rm d}{{\rm d}\mu}\left(\mu g(\frac{1}{\mu})\right)
= g(\frac{1}{\mu}) - \frac{1}{\mu} g'(\frac{1}{\mu})
= g(\epsilon) - \epsilon g'(\epsilon).
\end{equation}

We first consider the case where $\mu<0$. In such case, \eqref{eq:dtildelambda} becomes 
\[
\frac{\rm d}{{\rm d}\mu}\widetilde{\lambda}_{\ell}(\mu)
= \widehat{\lambda}_1(\epsilon) - \epsilon \widehat{\lambda}_1'(\epsilon),\quad \forall\mu<0.
\]

Since $\widehat{\lambda}_1(\epsilon)$ is real analytic on $\mathbb{R}$, 
$\widehat{\lambda}_1(\epsilon) - \epsilon \widehat{\lambda}_1'(\epsilon)$ is also real analytic on $\mathbb{R}$. 
Thus we have the following two cases: 
\begin{enumerate}[(i)]
\item\label{item:lamprime1} 
$\widehat{\lambda}_1(\epsilon)- \epsilon \widehat{\lambda}_1'(\epsilon)\equiv 0$ on $\mathbb{R}$.

\item\label{item:lamprime2} 
$\widehat{\lambda}_1(\epsilon)- \epsilon \widehat{\lambda}_1'(\epsilon)$ has only finite number 
of roots on any finite closed interval.
\end{enumerate}
If Case~\eqref{item:lamprime1} holds, then the real analytic function 
$\frac{\rm d}{{\rm d}\mu}\widetilde{\lambda}_{\ell}(A-\mu C)=0$ for $\mu\neq0$,
which further implies 
$\frac{\rm d}{{\rm d}\mu}\widetilde{\lambda}_{\ell}(A-\mu C)\equiv0$ 
for $\mu\in\mathbb{R}$. Thus there exists a fixed $\sigma_0$ 
such that $\widetilde{\lambda}_{\ell}(A-\mu C)\equiv\sigma_0$. 
Then $(A-\sigma_0 I, C)$ is singular and it contradicts the assumption.
	
Therefore, only Case~\eqref{item:lamprime2} holds. If 
$\widehat{\lambda}_1(\epsilon)- \epsilon \widehat{\lambda}_1'(\epsilon)$ has no negative roots, 
we define $\epsilon_0=1$. Otherwise we define 
\[
\epsilon_0 = \min\{|\epsilon| \mid 
\widehat{\lambda}_1(\epsilon)- \epsilon \widehat{\lambda}_1'(\epsilon)=0, \epsilon<0\}.
\]
Then we have
$\widehat{\lambda}_1(\epsilon) - \epsilon \widehat{\lambda}_1'(\epsilon)\neq0$
for $\epsilon\in(-\epsilon_0,0)$, 
which is equivalent to
$\frac{\rm d}{{\rm d}\mu}\widetilde{\lambda}_{\ell}(A-\mu C)\neq0$ 
for $\mu\in\left(-\infty,\frac{1}{\epsilon_0}\right)$. Similarily, we could prove there exists $\epsilon_1>0$, such that
$\frac{\rm d}{{\rm d}\mu}\widetilde{\lambda}_{\ell}(A-\mu C)\neq0$ 
for $\mu\in\left(\frac{1}{\epsilon_1},+\infty\right)$. 

Let $M=\frac{1}{\min\{\epsilon_0,\epsilon_1\}}$ and we reach the conclusion.
\end{proof}

By Proposition~\ref{monotonous}, we have the following proposition.
\begin{proposition}\label{notripleoutside}
	If the matrix pair $(A-\sigma I, C)$ is regular
	for any $\sigma \in \mathbb{R}$,
	then there exists a positive constant $\widetilde{M}$
	such that all 2D-eigenvalues $(\mu, \lambda)$ of $(A,C)$ are
	bounded by $[-\widetilde{M},\widetilde{M}] \times \mathbb{R}$.
\end{proposition}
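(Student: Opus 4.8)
\emph{Proof proposal.}
The plan is to combine Proposition~\ref{monotonous} with the structural description of a 2D-eigentriple provided by Remark~\ref{remakr52} and Theorem~\ref{analytic_prop3}. Let $M_i>0$ be the constant given by Proposition~\ref{monotonous} for the analyticalized eigencurve $\widetilde{\lambda}_i(\mu)$, and set $M_0=\max_{1\le i\le n}M_i$, so that $\widetilde{\lambda}_i'(\mu)\neq0$ for every $i$ whenever $|\mu|>M_0$. It is enough to prove that only finitely many 2D-eigenvalues $(\mu,\lambda)$ satisfy $\mu>M_0$ (the case $\mu<-M_0$ being entirely symmetric): once this is known, any $\widetilde{M}\ge M_0$ that also exceeds the finitely many $|\mu|$-coordinates of these exceptional 2D-eigenvalues has the required property.

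Fix a 2D-eigentriple $(\mu_*,\lambda_*,x_*)$ with $\mu_*>M_0$, let $k$ be the multiplicity of $\lambda_*$ as an eigenvalue of $A-\mu_*C$, let $X_k$ carry an orthonormal basis of the corresponding eigenspace, and put $C_k=X_k^HCX_k$. Writing $x_*=X_kz$ with $z\neq0$ yields $z^HC_kz=x_*^HCx_*=0$, so $C_k$ is not definite; this is exactly Remark~\ref{remakr52}. By Theorem~\ref{analytic_prop3}, the derivatives at $\mu_*$ of the $k$ analyticalized eigencurves through $(\mu_*,\lambda_*)$ are, counting multiplicities, the eigenvalues of $-C_k$; none of them vanishes because $\mu_*>M_0$, and combined with the non-definiteness of $C_k$ this forces $-C_k$ to have at least one strictly positive and at least one strictly negative eigenvalue. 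Hence there are indices $a\neq b$ with $\widetilde{\lambda}_a(\mu_*)=\widetilde{\lambda}_b(\mu_*)=\lambda_*$, $\widetilde{\lambda}_a'(\mu_*)>0$ and $\widetilde{\lambda}_b'(\mu_*)<0$. Since $\widetilde{\lambda}_a'$ and $\widetilde{\lambda}_b'$ are continuous and do not vanish on the interval $(M_0,\infty)$, they keep a constant sign there, so $\widetilde{\lambda}_a$ is strictly increasing and $\widetilde{\lambda}_b$ is strictly decreasing on $(M_0,\infty)$.

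Now assign to each 2D-eigenvalue $(\mu_*,\lambda_*)$ with $\mu_*>M_0$ some unordered pair $\{a,b\}\subseteq\{1,\dots,n\}$ as furnished above. This assignment is injective: if two 2D-eigenvalues $(\mu_*,\lambda_*)$ and $(\mu_*',\lambda_*')$ are sent to the same pair $\{a,b\}$, then $\widetilde{\lambda}_a$ and $\widetilde{\lambda}_b$ pass through both points; if $\mu_*=\mu_*'$ this gives $\lambda_*=\widetilde{\lambda}_a(\mu_*)=\widetilde{\lambda}_a(\mu_*')=\lambda_*'$, while if, say, $M_0<\mu_*<\mu_*'$, the monotonicity of $\widetilde{\lambda}_a$ and $\widetilde{\lambda}_b$ on $(M_0,\infty)$ would give $\lambda_*<\lambda_*'$ from $\widetilde{\lambda}_a$ and $\lambda_*>\lambda_*'$ from $\widetilde{\lambda}_b$, which is impossible; hence $(\mu_*,\lambda_*)=(\mu_*',\lambda_*')$. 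Consequently there are at most $\binom{n}{2}$ 2D-eigenvalues with $\mu>M_0$, and the proof is complete. The delicate point, and the step I expect to be the main obstacle, is producing the pair $\{a,b\}$: a 2D-eigentriple with large $\mu$ need not correspond to a local extremum (hence to a zero derivative) of any eigencurve, as Examples~\ref{example1} and~\ref{example1_2} show, so one has to use the sign pattern of the eigenvalues of $-C_k$ from Remark~\ref{remakr52} rather than a first-order extremality condition. (Alternatively one could argue by contradiction starting from a sequence of 2D-eigenvalues with $\mu_k\to+\infty$ and invoke the uniqueness of analytic functions; the argument above is a streamlined version of that.)
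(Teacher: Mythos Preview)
Your proof is correct and follows essentially the same strategy as the paper's: both use Proposition~\ref{monotonous} to get a threshold $M_0$ beyond which every analyticalized eigencurve has nonvanishing derivative, and then argue that only finitely many 2D-eigenvalues can lie past $M_0$. The paper disposes of the non-intersection case directly (a simple eigenvalue would force $\widetilde{\lambda}'(\mu_*)=0$) and handles the intersection case by a pigeonhole-plus-monotonicity contradiction (``two eigencurves would intersect infinitely often, which cannot happen since they are strictly monotone''). Your version makes that last step fully explicit: you extract from Remark~\ref{remakr52} and Theorem~\ref{analytic_prop3} an opposite-sign pair $\{a,b\}$ at each such 2D-eigenvalue, observe that the signs persist on $(M_0,\infty)$, and show the assignment $(\mu_*,\lambda_*)\mapsto\{a,b\}$ is injective, yielding the concrete bound $\binom{n}{2}$ rather than a bare finiteness statement. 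This is a genuine sharpening, and it also patches a point the paper leaves implicit---two strictly monotone analytic functions \emph{can} intersect infinitely often on a ray if they are monotone in the same direction, so the opposite-sign information really is what makes the paper's ``this cannot happen'' work.
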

\begin{proof}
Let $\widetilde{\lambda}_1(\mu),\cdots, \widetilde{\lambda}_n(\mu)$ 
be $n$ analyticalized eigencurves. 
According to Proposition~\ref{monotonous}, there exists $M$ such that 
$\widetilde{\lambda}_i(\mu)$ are all monotonous with nonzero derivatives 
when $|\mu|\geq M$. Thus no 2D-eigentriplets corresponding to 
non-intersections exist for $|\mu|\geq M$. If there are infinitely 
many eigentriplets for $|\mu|>M$ corresponding to intersections, 
then there exist two eigencurves having infinitely many intersections. 
However, since they are strictly monotonous for $|\mu|>M$, this cannot happen.
Thus there exist only finitely many 2D-eigentriplets outside 
$\left[-M,M\right]\times [\text{lb},\text{ub}]$. 
$\widetilde{M}$ can be found by increasing $M$.
\end{proof}

{\em Proof of Theorem~\ref{thm:finitemany}.}
The necessary condition is immediately from 
Proposition~\ref{irregularmeansinfinite}.
We only need to prove the sufficiency.
With Proposition~\ref{notripleoutside}, all 2D-eigenvalues are
bounded by $[-\widetilde{M},\widetilde{M}] \times \mathbb{R}$.
Then by Proposition~\ref{boundedimplyfinite},
the total number of 2D-eigenvalues is finite. \hfill $\Box$

\medskip
We now provide a vivid description of
numbers of 2D-eigenvalues on an analyticalized eigencurve.
We will see that besides possible trival 2D-eigenvalues,
there are only finite number of 2D-eigenvalues.
\begin{theorem}\label{finiteandinfinite}
There are only following two cases on  an analyticalized eigencurve
$\widetilde{\lambda}(\mu)$ of $A-\mu C$:
\begin{itemize}
\item there are finite number of 2D-eigenvalues on
$\widetilde{\lambda}(\mu)$, or
\item $\widetilde{\lambda}(\mu)$ is a horizontal line and
all points on the horizontal line are 2D-eigenvalues.
\end{itemize}
\end{theorem}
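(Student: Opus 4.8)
The plan is to split according to whether the analyticalized eigencurve $\widetilde{\lambda}(\mu)$ is a constant function; note that a constant analyticalized eigencurve is exactly a horizontal line in the $(\mu,\lambda)$-plane. If $\widetilde{\lambda}\equiv c$, let $\widetilde{x}(\mu)$ be the associated analytic eigenvector of Theorem~\ref{analytic_prop}. Differentiating $(A-\mu C)\widetilde{x}(\mu)=c\,\widetilde{x}(\mu)$ and multiplying on the left by $\widetilde{x}(\mu)^H$ gives $\widetilde{x}(\mu)^HC\widetilde{x}(\mu)=-\widetilde{\lambda}'(\mu)=0$ for every $\mu$, the identity already used in the proofs of Theorem~\ref{thm_minima_is_2devl} and Proposition~\ref{irregularmeansinfinite}. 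Hence $(\mu,c,\widetilde{x}(\mu))$ satisfies all three equations of \eqref{2deig} for every $\mu\in\mathbb{R}$, so $\widetilde{\lambda}$ is a horizontal line all of whose points are 2D-eigenvalues, which is the second alternative. It then remains to prove that a non-constant $\widetilde{\lambda}$ carries only finitely many 2D-eigenvalues.

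For this I would use the characterization, implicit in the intersection-case proof of Theorem~\ref{thm_minima_is_2devl} together with Remark~\ref{remakr52}, that if $\lambda_0=\widetilde{\lambda}(\mu_0)$ has multiplicity $k$ with orthonormal eigenbasis $X_k$, then $(\mu_0,\lambda_0)$ is a 2D-eigenvalue if and only if $C_k=X_k^HCX_k$ is not (positive or negative) definite. Combined with Theorem~\ref{analytic_prop3}, which puts the eigenvalues of $-C_k$ in one-to-one correspondence with the derivatives at $\mu_0$ of the $k$ analyticalized eigencurves through $(\mu_0,\lambda_0)$, this says that $(\mu_0,\widetilde{\lambda}(\mu_0))$ is a 2D-eigenvalue exactly when that multiset of derivatives contains both a nonnegative and a nonpositive value. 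In particular $(\mu_0,\widetilde{\lambda}(\mu_0))$ is a 2D-eigenvalue whenever $\widetilde{\lambda}'(\mu_0)=0$; and when $\widetilde{\lambda}'(\mu_0)\neq0$ it is a 2D-eigenvalue only if some analyticalized eigencurve $\widetilde{\lambda}_j$ through $(\mu_0,\widetilde{\lambda}(\mu_0))$ has derivative there of the opposite sign, which forces $\widetilde{\lambda}_j\not\equiv\widetilde{\lambda}$ (a curve identically equal to $\widetilde{\lambda}$ contributes the same, nonzero, derivative) together with $\widetilde{\lambda}_j(\mu_0)=\widetilde{\lambda}(\mu_0)$.

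Two finiteness facts then finish the argument. First, $\widetilde{\lambda}'$ is analytic on $\mathbb{R}$ by Theorem~\ref{analytic_properties} and, since $\widetilde{\lambda}$ is non-constant, is not identically zero; by the uniqueness of analytic functions its zeros do not accumulate at a finite point, and from the large-$|\mu|$ expansion $\widetilde{\lambda}(\mu)=\alpha_0\mu+\alpha_1+\alpha_2/\mu+\cdots$ derived in the proof of Proposition~\ref{monotonous} we get $\widetilde{\lambda}'(\mu)=\alpha_0-\alpha_2/\mu^2-\cdots$, which is bounded away from $0$ for $|\mu|$ large unless $\widetilde{\lambda}$ is constant; hence $\widetilde{\lambda}'$ has only finitely many zeros on $\mathbb{R}$. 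Second, for each of the finitely many analyticalized eigencurves $\widetilde{\lambda}_j\not\equiv\widetilde{\lambda}$, the difference $\widetilde{\lambda}_j-\widetilde{\lambda}$ is analytic and not identically zero, so by the same two ingredients (no accumulation of zeros in a compact interval, and a dominant leading term in its expansion at $\mu=\pm\infty$) it vanishes at only finitely many $\mu_0$; the union of these sets over such $j$ is finite. By the second paragraph every 2D-eigenvalue on $\widetilde{\lambda}$ lies over a zero of $\widetilde{\lambda}'$ or over this exceptional set, so there are only finitely many, which is the first alternative.

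I expect the main obstacle to be the bookkeeping at multiple-eigenvalue points, in particular handling the case in which $\widetilde{\lambda}$ coincides identically with one or more of the other analyticalized eigencurves, so that every point of $\widetilde{\lambda}$ is an intersection; the argument above handles it because such a coinciding curve merely reproduces $\widetilde{\lambda}$'s own derivative, which is nonzero away from its finitely many critical points, and therefore cannot by itself render $C_k$ indefinite. A secondary point is that Proposition~\ref{monotonous} is stated under the hypothesis that $(A-\sigma I,C)$ is regular for all $\sigma$, which we are not assuming here; but re-reading its proof, the asymptotic expansion and the statement ``$\widetilde{\lambda}'(\mu)\neq0$ for $|\mu|$ large'' already follow from the non-constancy of the individual eigencurve, which is exactly the situation for the $\widetilde{\lambda}$ under consideration.
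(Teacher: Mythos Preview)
Your proof is correct and follows essentially the same route as the paper's: both reduce the non-horizontal case to the two facts that (i) a non-constant analyticalized eigencurve has $\widetilde{\lambda}'(\mu)\neq 0$ for $|\mu|$ large (extracted from the proof of Proposition~\ref{monotonous}, using only non-constancy rather than the regularity hypothesis, exactly as you observe), and (ii) isolated-zeros/uniqueness for analytic functions on compact intervals. The only organizational differences are that you explicitly dispatch the horizontal-line alternative (the paper leaves this implicit, relying on the computation already done in Proposition~\ref{irregularmeansinfinite}), and that for the intersection contribution you count directly---finitely many zeros of $\widetilde{\lambda}_j-\widetilde{\lambda}$ for each $j$ with $\widetilde{\lambda}_j\not\equiv\widetilde{\lambda}$, then take a finite union---whereas the paper argues by contradiction via an accumulating subsequence and uniqueness, as in Proposition~\ref{boundedimplyfinite}. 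Your observation that a curve identically equal to $\widetilde{\lambda}$ cannot by itself make $C_k$ indefinite at a point where $\widetilde{\lambda}'\neq 0$ is the right way to handle permanent multiplicity and matches the paper's implicit treatment.
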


\begin{proof}
We only need to show that there are only finite number of 
2D-eigenvalues on analyticalized eigencurves that are not horizontal lines. 
For these non-horizontal analyticalized eigencurves, the proof of 
Proposition~\ref{monotonous} actually shows that their derivatives 
are non-zero for sufficiently large $|\mu|$. Utilizing this fact 
and following the proof in Proposition~\ref{notripleoutside}, 
we know all 2D-eigenvalues on them are bounded by 
$[-\widetilde{M},\widetilde{M}]\times\mathbb{R}$ for a large scalar 
$\widetilde{M}$. Finally, the proof in 
Proposition~\ref{boundedimplyfinite} indicates there could not be 
infinite number of 2D-eigenvalues on these non-horizontal analyticalized 
eigencurves among $[-\widetilde{M},\widetilde{M}]\times\mathbb{R}$. 
Thus we reach the conclusion.
\end{proof}

\begin{remark} 
If $C$ is nonsingular, then the matrix pair $(A-\sigma I, C)$ is 
regular for any shift $\sigma\in\mathbb{R}$. This implies that
there are only finite number of 2D eigenvalues of $(A, C)$ 
when $C$ is nonsingular.
\end{remark}

\section{Applications revisited}\label{sec_examples}
In this section, we revisit the two origins of the 2DEVP 
presented in Section~\ref{applications}.

\subsection{Minmax of Rayleigh Quotients}

We first show that the search interval of 
the EVopt~\eqref{eq:EVopt0} can be reduced
from $\mathbb{R}$ to the interval $[0,1]$.

\begin{theorem}\label{thm:app11}
For Case-\ref{item:A3} of Theorem~\ref{thm:classification0}, 
i.e., $\lambda_A\leq\theta_B$ and $\lambda_B\leq\theta_A$,
the EVopt~\eqref{eq:EVopt0} satisfies
\begin{equation}\label{firststatement}
\max\limits_{\mu\in\mathbb{R}}\lambda_{\min}(A-\mu C)
= \max\limits_{{\mu\in[0,1]}}\lambda_{\min}(A-\mu C).
\end{equation}
Furthermore, if $\lambda_A<\theta_B$ and $\lambda_B<\theta_A$, 
the EVopt~\eqref{eq:EVopt0} satisfies 
\begin{equation}\label{secondstatement}
\arg\max\limits_{\mu\in\mathbb{R}}\lambda_{\min}(A-\mu C)
= \arg\max\limits_{{\mu\in(0,1)}}\lambda_{\min}(A-\mu C).
\end{equation}
\end{theorem}
\begin{proof}
By Theorem~\ref{thm_convex}, 
the minimum eigenvalue $\lambda_{\min}(\mu) \equiv \lambda_{n}(\mu)$ 
of $A-\mu C$ is concave.
Thus to prove the identity \eqref{firststatement}, it is sufficient 
to prove that 
\begin{equation}\label{eq:QCQP1}
\lambda^{'(-)}_{n}(1)\equiv 
\lim\limits_{t\rightarrow 0^+}\frac{\lambda_n(1)-\lambda_n(1-t)}{t} \leq 0,
\end{equation}
and 
\begin{equation}\label{eq:QCQP2}
\lambda^{'(+)}_{n}(0)\equiv 
\lim\limits_{t\rightarrow 0^+}\frac{\lambda_n(t)-\lambda_n(0)}{t} \geq 0.
\end{equation}
For \eqref{eq:QCQP1}, if $\lambda_n(1)=\lambda_B$ is 
a simple eigenvalue of $A-C=B$, then $\lambda_n$ is differentiable 
at $\mu=1$ and $\lambda_n'(1) = -x^HCx = -x^H(A-B)x$,
where $x$ is the eigenvector of $B$ associated with $\lambda_B$. Thus 
$\lambda_n'(1) = -x^HAx+x^HBx =\lambda_B-\theta_A \leq 0$
and \eqref{eq:QCQP1} holds.
If $\lambda_n(1)$ is not simple, by Theorem~\ref{thm:onesidedderivative}, $\lambda^{'(-)}_{n}(1)$ equals to the maximum eigenvalue of $-C_k = {S_B^H(B-A)S_B}$, where $S_B$ is the eigen-subspace of $\lambda_n(1)=\lambda_B$ for $A-C = B$. In 
Case-\ref{item:A3} of Theorem~\ref{thm:classification0}, 
any nonzero $x$ belonging to $S_B$ satisfies 
$x^HBx=\lambda_B\leq\theta_A\leq x^HAx$. Thus $-C_k$ is 
negative or semi-negative definite. This is to say, 
$\lambda^{'(-)}_{n}(1)\leq0$. The argument for \eqref{eq:QCQP2} is similar. 
Thus the identity~\eqref{firststatement} holds. 

To prove \eqref{secondstatement}, we only need to prove the inequalities 
in \eqref{eq:QCQP1} and \eqref{eq:QCQP2} are strict. For the strict inequality 
in \eqref{eq:QCQP1}, it is sufficient to prove $\lambda^{'}_{n}(1)<0$ 
when multiplicity of $\lambda_B$ is one and 
$\lambda^{'(-)}_{n}(1)<0$ when multiplicity of $\lambda_B$ is larger than one. 
This can be proved using the same argument and noting $\lambda_B<\theta_A$. 
The argument for the strict inequality in $\eqref{eq:QCQP2}$ is similar.
\end{proof}

By combining Theorems~\ref{thm:classification0} and \eqref{thm:app11}, 
we establish the following equivalence between the RQminmax 
and the eigenvalue optimization:    
\begin{equation}\label{yuanlemma}
\min\limits_{x\neq 0}\max\left\{\frac{x^TAx}{x^Tx}, \frac{x^TBx}{x^Tx}\right\}
= \max\limits_{\mu\in[0,1]}\lambda_{\min}(A-\mu C).
\end{equation}
Specifically, if it is Case-\ref{item:A1} of
Theorem~\ref{thm:classification0}, by using the similar argument 
in the proof of the identity \eqref{firststatement},
in Theorem~\ref{thm:app11},
we can prove that $\lambda^{'(+)}_{n}(0)<0$. Thus with the concavity 
of $\lambda_n(\mu)$, for any $\mu \in [0,1]$, we have
\[
\lambda_n(\mu)\leq \lambda_n(0) + \mu\lambda^{'(+)}_n(0)\leq 
\lambda_n(0) = \lambda_{\min}(A).
\]
This implies that the equation \eqref{yuanlemma} holds 
in Case-\ref{item:A1}. The similar argument shows the 
equation \eqref{yuanlemma} also holds for Case-\ref{item:A2}.

We note that the equivalence identity \eqref{yuanlemma} is exactly 
Yuan's lemma \cite[Lemma~2.3]{Yuan1990}, which is closely related 
to the well-known S-lemma in control theory and robust 
optimization~\cite{Polik2007,zong2010}.

By Definition~\ref{def:maxmin2dev}, we know that if $\mu_*$ is
an optimizer of the eigenvalue optimization, 
then $(\mu_*, \lambda_{\min}(\mu_*))$ is the minimum 2D-eigenvalue
of $(A, C)$. We will demonstrate the advantages of the 2DEVP 
reformulation of the RQminmax in our forthcoming work on algorithms 
for solving the 2DEVP. 

\subsection{Distance to instability}
From Section~\ref{applications}, we learn that
the calculation of the distance to instability $\beta(\widehat{A})$ 
can be recast as an eigenvalue optimization problem \eqref{eq_distance}, 
namely $\beta(\widehat{A}) = \lambda_m(\mu_*)$, where
$\mu_*$ is an optimizer of the $m$-th eigencurve $\lambda_m(\mu)$ of $A-\mu C$.
Van Loan \cite{ahownear1985} proved that the optimizer 
$\mu_* \in [-2\|{A}\|, 2\|{A}\|]$.

By Theorem~\ref{thm_minima_is_2devl}, if $\mu_*$ is an optimizer of \eqref{eq_distance}, then 
$(\mu_*, \beta(\widehat{A}))$ is a 2D-eigenvalue of the 2DEVP of $(A, C)$.
In addition, we have the following list of characterizations of 
the target 2D-eigentriplet:
\begin{itemize} 
	\item If $(\mu,\lambda,\left[\begin{smallmatrix}
		u\\ v
	\end{smallmatrix}\right])$ is a 2D-eigentriplet of $(A, C)$, 
	then $(\mu,-\lambda,\left[\begin{smallmatrix}
		-u\\ v \end{smallmatrix}\right])$ is also a 2D-eigentriplet. 
	This implies the 2D-eigenvalues are symmetric with regard to $\lambda = 0$.
	
	\item Based on the ordering of $2m$ eigenvalues of $A-\mu C$: 
	\begin{equation}\label{eq:sortedEvlInDTI}
		\lambda_1(\mu) \ge \lambda_2(\mu) \ge\cdots\ge
		\lambda_m(\mu) >0>\lambda_{m+1}(\mu)  \ge\cdots\ge \lambda_{2m}(\mu),
	\end{equation}
	we have the following characterization of $\beta(\widehat{A})$:
	\begin{equation}\label{eq:locationDTI0}
		\begin{aligned}
			\beta(\widehat{A}) &=	\min\{\lambda \mid 
			\mbox{$(\mu,\lambda)$ is a 2D-eigenvalue of $(A,C)$ and $\lambda>0$}
			\}\\
			&=-\max\{\lambda \mid 
			\mbox{$(\mu,\lambda)$ is a 2D-eigenvalue of $(A,C)$ and $\lambda<0$} \}\\
			&=\min\{|\lambda| \mid 
			\mbox{$(\mu,\lambda)$ is a 2D-eigenvalue of $(A,C)$}
			\}.
		\end{aligned}
	\end{equation}
\end{itemize} 
Moreover, by Theorem~\ref{boundonmu} on the range of $\mu$ of
the 2D-eigenvalue $(\mu,\lambda)$,  
we immediately conclude that the optimal $\mu_*$ must be in 
$[-\|{A}\|, \|{A}\|]$. This shortens the search interval by a half. 
Finally, by Theorem~\ref{thm:finitemany}, we know that 
there is only a finite number of local minima 
of $\lambda_m(\mu)$ in $[-\|{A}\|, \|{A}\|]$. 

\begin{example}{\rm 
Consider the following stable matrix
from \cite[Example~5]{Freitag2011Aaa}:
\[
\widehat{A} = \begin{bmatrix}
          -0.4+6{\tt i} & 1 &  &  \\
	  1 & -0.1+1{\tt i} & 1 &  \\
	   & 1 & -1-3{\tt i} & 1 \\
	   &  & 1 & -5+1{\tt i}
        \end{bmatrix}.
\]
The distance to instability is
\[
\beta(\widehat{A}) = \min_{\mu \in \mathbb{R}} \lambda_{4}(A - \mu C)
\quad \mbox{with} \,
A = \begin{bmatrix} & \widehat{A} \\ \widehat{A}^H & \end{bmatrix}
\, \mbox{and} \,
B = \begin{bmatrix} & {\tt i}I_4  \\ -{\tt i}I_4 & \end{bmatrix}.
\]
Figure~\ref{sigmaminplot} shows the eigencurve $\lambda_{4}(\mu)$
on the interval $[-2\|A\|, 2\|A\|]$.
As we can see that $\lambda_{4}(\mu)$ is monotonic outside
the interval $[-\|A\|, \|A\|]$. The optimal $\mu_*$ locates within
$[-\|A\|, \|A\|]$ masked by "*".
\hfill $\Box$

\begin{figure}[tbhp]
\centering
\includegraphics[scale=0.45]{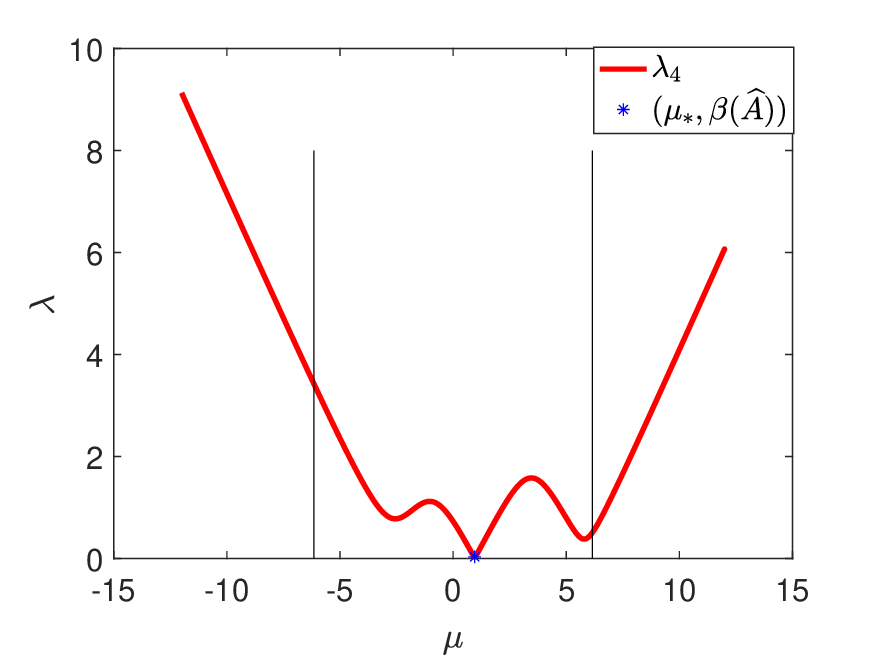}
\caption{$\lambda_{4}(\mu)$ on the interval $[-2\|A\|, 2\|A\|]$.
The vertical lines are $\pm\|A\|$.
Note that $\lambda_{4}(\mu)$ is monotonic outside 
the interval $[-\|A\|, \|A\|]$.
}
\label{sigmaminplot}
\end{figure}

}\end{example} 

Numerous algorithms have been developed for computing the
distance $\beta(\widehat{A})$
\cite{boyd1990,bBye1988,Freitag2011Aaa,dinverse1999,csparse2006,ahownear1985}.
By reformulating the distance problem to the 2DEVP, it provides
an opportunity for a new class of methods to solve the distance problem,
in particular, for handling large scale ones.
This will be the subject in the sequel of this paper.


\section{Conclusion}  \label{sec:conclude}
We introduced a 2DEVP of a Hermitian matrix pair $(A,C)$.
We highlighted the relationship between
the well-known eigenvalue optimization problem of the parameter matrix
$H(\mu) = A - \mu C$ and the 2DEVP.
We presented essential properties of the 2DEVP such as the
existence and the necessary and sufficient condition for
the finite number of 2D-eigenvalues.
In addition, we also provided the variational characterizations of
2D-eigenvalues and the bound of 2D-eigenvalues.
We used eigenvalue optimization problems from the
quadratic constrained quadratic program and the computation
of distance to instability to show new insights of these problems
derived from the properties of the 2DEVP.

This paper is the first in a sequel on the study of the 2DEVP.
An immediate sequel to this work will focus on numerical algorithms for
solving the 2DEVP.


\appendix 
\section{Proof of Theorem~\ref{thm:classification0}} \label{append:thmRQ} 


For Case-\ref{item:A1}, we first note that for any $x\neq0$, 
	\[
	\max\{\rho_A(x),\rho_B(x)\} \geq \rho_A(x)\geq\lambda_A.
	\]
	On the other hand, for $x_* = S_A z_B$,  
	\[
	\max\{\rho_A(x_*),\rho_B(x_*)\}
	= \max\{z^H_B S^H_A A S_A z_B,  z^H_B S^H_A B S_A z_B \} 
	= \max\{\lambda_A, \theta_B\} = \lambda_A, 
	\] 
	where for the last equality, we use the condition 
	{$\lambda_A>\theta_B$}.
	Thus $x_*$ is a solution of the RQminmax~\eqref{prob:minmaxRQ}. 
	
Case-\ref{item:A2} is proven by exchanging the roles
	of $A$ and $B$ in the proof of Case-\ref{item:A1}.
	
	For Case-\ref{item:A3} 
	(i.e., {$\lambda_A \leq \theta_B$ and $\lambda_B \leq \theta_A$}), 
	we first prove by contradiction that in this case, 
	if $x_*$ is a solution of the RQminmax~\eqref{prob:minmaxRQ}, then 
	\begin{equation}\label{eq:xCx}
	x_*^HCx_*=0. 
	\end{equation}

We prove by contradiction. Assume $x_*^HCx_*>0$, 
i.e., $x_*^HAx_*>x_*^HBx_*$. Then $x_*$ does not belong to $S_A$ 
since otherwise $\lambda_A = \rho_A(x_*)>\rho_B(x_*)\geq\theta_B$ 
and contradicts the condition that $\lambda_A\leq\theta_B$. 
Consider $x(t) = x_*+t\sign(x_A^Hx_*)x_A$ with $t>0$, 
where by convention $\sign(0)=1$. A straightforward calculation shows
\begin{equation}
	\rho_A(x(t)) = \frac{\|x_*\|^2\rho_A(x_*)+
		\left(t^2 \|x_A\|^2+2t |x_*^Hx_A|\right)\lambda_A}{\|x_*\|^2+t^2 \|x_A\|^2
		+2t |x_*^Hx_A|}<\rho_A(x_*).
\end{equation}
On the other hand, by the continuity of $\rho_A(x(t))$ and 
$\rho_B(x(t))$ 
with respect to $t$, $\rho_B(x(t))<\rho_A(x(t))$ holds for 
a sufficiently small $t$. This implies for such $t$ we have 
\[
\max\{\rho_A(x(t)), \rho_B(x(t))\} 
= \rho_A(x(t)) < \rho_A(x_*) 
= \max\{\rho_A(x_*), \rho_B(x_*)\}, 
\]
which contradicts the condition that $x_*$ is the solution of 
the RQminmax~\eqref{prob:minmaxRQ}. Hence $x_*^HCx_*\leq0$. 
A similar argument 
leads to $x_*^HCx_*\geq0$ and therefore we have the identity \eqref{eq:xCx}. 
	
	The identity \eqref{eq:xCx} implies that in Case-\ref{item:A3}, we have
	\[
	\arg\min\limits_{x\neq0}\max\{\rho_A(x),\rho_B(x)\}=\arg\min\limits_{\substack{x^HCx=0\\x\neq 0}}\max\{\rho_A(x),\rho_B(x)\}=\arg\min\limits_{\substack{x^HCx=0\\x\neq 0}}\rho_A(x).\]
	Thus $x_*$ is the optimizer of the RQminmax~\eqref{prob:minmaxRQ} 
	if and only if $x_*$ is the optimizer of the 
	following constrained Rayleigh quotient optimization problem:
	\begin{equation} \label{primalproblem1}
	\min_{\substack{x^HCx=0\\x\neq 0}} \rho_A(x).
	\end{equation}
	Now according to Theorem~\ref{anavarchar} and the definition of $\mu_*$, we have
	\begin{equation}\label{eq:temp2}
	\min\limits_{\substack{x^HCx=0\\x\neq 0}} \rho_A(x) 
	= \max\limits_{\mu\in \mathbb{R}}\lambda_{\min}(A-\mu C) 
	= \lambda_{\min}(A-\mu_*C).
	\end{equation}
	Thus, by \eqref{primalproblem1} and \eqref{eq:temp2}, we have    
	\begin{align*}
	\arg\min_{x\neq 0}\max\{\rho_A(x),\rho_B(x)\}
	& =\arg\min_{\substack{x^HCx=0\\x\neq 0}} \rho_A(x)  \\ 
	&=\{x\neq0 \mid \rho_A(x)=\lambda_{\min}(A-\mu_*C), x^HCx=0\}\\
	&=\{x\neq0 \mid \rho_{A-\mu_*C}(x)=\lambda_{\min}(A-\mu_*C), x^HCx=0\},
	\end{align*}
	Note that for any $x\neq0$, 
	the identity $\rho_{A-\mu_*C}(x)=\lambda_{\min}(A-\mu_*C)$ 
	is equivalent to the fact that $x$ is an eigenvector corresponding to 
	$\lambda_{\min}(A-\mu_*C)$. \\	
	This completes the proof that
	for Case-\ref{item:A3},  
	$x_*$ is the solution of 
	the RQminmax~\eqref{prob:minmaxRQ} if and only if 
	$x_*$ is an eigenvector of $A - \mu_* C$
	corresponding to $\lambda_{\min}(A-\mu_* C)$ and $x_*^HCx_*=0$. 
	\hfill $\Box$



\bibliographystyle{siamplain}
\bibliography{2devp}
\end{document}